\documentclass[12pt,reqno]{amsart}
\usepackage{amsmath,amsthm,amssymb,amsfonts}
\usepackage[hidelinks]{hyperref}
\usepackage{geometry}
\usepackage{etoolbox}
\usepackage{orcidlink}

\numberwithin{equation}{section}

\theoremstyle{plain}
\newtheorem{theorem}{Theorem}[section]
\newtheorem{lemma}[theorem]{Lemma}
\newtheorem{corollary}[theorem]{Corollary}

\theoremstyle{remark}
\newtheorem{remark}[theorem]{Remark}

\newcommand{\dif}{\mathrm{d}}
\newcommand{\majorarc}{\mathfrak{M}}
\newcommand{\minorarc}{\mathfrak{m}}

\begin{document}
\title{Prime pairs along rays of prime indices}

\author{Shenghao Hua~\orcidlink{0000-0002-7210-2650}}
\address{Shanghai Institute for Mathematics and Interdisciplinary Sciences (SIMIS), Shanghai 200433, China}
\address{Research Institute of Intelligent Complex Systems, Fudan University, Shanghai, 200433, China}
\email{huashenghao@vip.qq.com}

\author{Sizhe Xie}
\address{Morningside Center of Mathematics, Academy of Mathematics and Systems Science, Chinese Academy of Sciences, Beijing 100190, China}
\email{szxie@amss.ac.cn}

\begin{abstract}
Let $p_j$ be the prime with index $j$.  We prove that the ratios $m/n$ for which $p_m+p_n$ is a square are dense in $\mathbb R_{>0}$.
The same is true when $|p_m-p_n|$ is a square.  In every nonempty open interval, almost every index can be used as a numerator and as a denominator.
We also give a quantitative lower bound for the number of possible partners.
\end{abstract}

\subjclass[2020]{Primary 11N32; Secondary 11P32, 11P55, 11L07, 11L20}

\keywords{Prime indices, circle method}

\maketitle

\section{Introduction}

Questions about sums and differences of primes are basic in number theory.
The Goldbach conjecture says that every even integer $N\geq4$ is a sum of two primes.
The twin prime conjecture says that there are infinitely many prime pairs of the form $(p,p+2)$.  Both problems remain open.
Chen proved that every sufficiently large even integer is a sum of a prime and a number with at most two prime factors~\cite{Chen1966,Chen1973}.
Zhang proved that bounded gaps between primes occur infinitely often~\cite{Zhang2014}.
Later work of Maynard and the Polymath project improved the size of the gap~\cite{Maynard2015,Polymath2014,PolymathSieve2014}.  If
\begin{equation*}
 H_1=\liminf_{n\to\infty}(p_{n+1}-p_n),
\end{equation*}
then the Polymath project proved that $H_1\leq246$~\cite[Theorem~4(i)]{PolymathSieve2014}.
Very recently, Stadlmann improved this
to $H_1\leq240$ \cite{Stadlmann2026}.

A conjecture of Zhi-Wei Sun~\cite[Conjecture 4.4]{Sun2017}, of a different flavor, involves the indices of primes rather than the primes themselves.
It asserts that for every positive rational number $r$, there exist positive integers $m,n$ such that $r=m/n$ and $p_m+p_n=k^2$ for some integer $k$, where $p_i$ denotes the $i$-th prime.
The same assertion is made with $p_m+p_n$ replaced by $|p_m-p_n|$.

The requirement that the ratio be fixed exactly makes these conjectures difficult.
Suppose that $r=a/b$ in lowest terms.
Every pair satisfying $m/n=r$ must be of the form $(m,n)=(ta,tb)$ for some positive integer $t$.
Thus one must find a value of $t$ for which $p_{ta}+p_{tb}$ or $|p_{ta}-p_{tb}|$ is a square.  Although each fixed value of $t$ can be checked, no finite search can disprove the conjecture, since a
solution may occur farther along the sequence.  Moreover, there is no simple formula for $p_n$ in terms of $n$, and present analytic methods do not control
these square conditions along such a thin sequence.
In this paper, we allow the ratio to vary inside any given open interval.
We begin with a qualitative version of our main result.

\begin{theorem}\label{thm:density}
Let $p_j$ be the prime with index $j$.
Put
\begin{align*}
 \mathcal R_\Sigma
 &:=\left\{\frac mn:m,n\geq1,\ p_m+p_n\text{ is a square}\right\},\\
 \mathcal R_\Delta
 &:=\left\{\frac mn:m,n\geq1,\ |p_m-p_n|
        \text{ is a square}\right\}.
\end{align*}
Then both $\mathcal R_\Sigma$ and $\mathcal R_\Delta$ are dense in $\mathbb R_{>0}$.
\end{theorem}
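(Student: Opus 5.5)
Fix an interval $(\alpha,\beta)\subset\mathbb R_{>0}$ with $0<\alpha<\beta$. It suffices to produce, for large $N$, primes $p,q$ with $p+q$ (resp.\ $p-q$) a perfect square and with $\pi(p)/\pi(q)\in(\alpha,\beta)$. Since $\pi(x)\sim x/\log x$, if $p\asymp q\asymp N$ then $\pi(p)/\pi(q)=(p/q)(1+o(1))$. So we only need primes with $p/q$ in a slightly shrunken interval $(\alpha',\beta')\subset(\alpha,\beta)$, where $\alpha'<\beta'$ are fixed.

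The plan is therefore to count solutions of $p+q=k^2$ (resp.\ $p-q=k^2$) with $p\in(\gamma N,(\gamma+\eta)N]$, $q\in(N,2N]$, where $\gamma\in(\alpha',\beta')$ and $\eta$ is small. The count is
\[
\sum_{k}\ \sum_{\substack{p+q=k^2\\ p\in I_1,\,q\in I_2}}\log p\log q ,
\]
taken over $k\asymp\sqrt N$. I would bound it below by the circle method, treating it as a binary Goldbach-type problem averaged over the even squares $n=k^2$, $k$ even. Exceptional sets in the binary Goldbach problem are known to be small: the number of even $n\le X$ failing the expected asymptotic for $\sum_{p+q=n}$ is $O(X^{1-\delta})$ (Montgomery--Vaughan), and with restricted ranges for $p$ and $q$ this holds in the same way. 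Since there are about $\sqrt N$ even squares, we cannot simply discard the exceptional set. Instead I would run the circle method directly on
\[
\int_0^1 S_1(\theta)S_2(\theta)\overline{T(\theta)}\,\dif\theta,
\]
where $S_i$ are prime exponential sums over $I_i$ and $T(\theta)=\sum_{k\ \mathrm{even}}e(k^2\theta)$ for $k^2\in[(1+\gamma)N,(2+\gamma+\eta)N]$. This is a ternary problem of the shape prime $+$ prime $=$ square, which is well within reach of standard methods.

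On the major arcs $\majorarc$ (with $q\le P=(\log N)^B$), we use the Siegel--Walfisz approximations for $S_i$ and the standard Gauss-sum approximation for $T$. This gives the main term $\mathfrak S\cdot J$, where the singular integral satisfies $J\asymp N^{3/2}$. The singular series $\mathfrak S$ is positive. The local factors at odd primes $\ell$ count solutions of $a+b\equiv c^2 \pmod{\ell}$ with $a,b\not\equiv0$, which is always positive. At $\ell=2$, taking $k$ even with $p,q$ odd works; for differences, take $k$ odd, since $p-q=k^2$ with $q=2$ is not needed. On the minor arcs $\minorarc$ we bound
\[
\int_{\minorarc}|S_1S_2T|\le \sup_{\minorarc}|S_1|\Bigl(\int|S_2|^2\Bigr)^{1/2}\Bigl(\int|T|^2\Bigr)^{1/2}\ll \sup_{\minorarc}|S_1|\cdot (N\log N)^{1/2}N^{1/4}.
\]
Vinogradov's bound gives $\sup_{\minorarc}|S_1|\ll N(\log N)^{-A}$. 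The product is then $N^{7/4}(\log N)^{-A+1/2}$, which is too weak against the main term of size $N^{3/2}$.

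This minor arc step is the main obstacle. The fix I would try first is to instead use Weyl/Hua-type bounds for $T$ on the minor arcs, $\sup_{\minorarc}|T|\ll N^{1/2}P^{-1/2+\epsilon}$, together with Parseval $\int|S_1S_2|\le(\int|S_1|^2\int|S_2|^2)^{1/2}\ll N\log N$. This gives $N^{3/2}\log N\cdot P^{-1/2+\epsilon}$, which is $o(N^{3/2})$ once $B$ is large enough. Extending the major arcs to $q\le P$ with $P$ a power of $\log N$ is fine, because Siegel--Walfisz holds uniformly there, so this closes the argument.

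This yields $\gg N^{3/2}/\log^2N$ solutions, hence at least one. The difference case is identical with $\overline{S_2}$ in place of $S_2$. Translating back gives indices $m=\pi(p)$, $n=\pi(q)$ with $m/n\in(\alpha,\beta)$, proving Theorem~\ref{thm:density}.
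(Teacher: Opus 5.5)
Your argument is correct in substance, and it takes a genuinely different route from the paper. The paper never attacks Theorem~\ref{thm:density} directly. It deduces it from Theorem~\ref{thm:main}, which \emph{fixes} one prime and shows that almost every prime in a block has $\gg\sqrt X/(\log X)^2$ values $h\asymp\sqrt X$ with $h^2\pm q$ or $P-h^2$ prime (Lemmas~\ref{lem:good-small} and~\ref{lem:good-large}). That requires mean square estimates over the shift $k$ for $\sum_{h\le x}\Lambda(h^2\pm k)$ (Theorems~\ref{thm:positive-main}--\ref{thm:reverse-main}), and hence heavy machinery: the real-character large sieve and a Siegel--Walfisz bound for the tails of $\mathfrak S_\pm(k)$, Mikawa's short-interval estimates, the weighted $T_1E_2$ bound, and $\mathfrak S_\pm(k)\gg1/\log(2k)$.

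You instead average over both primes, which makes the problem ternary (two primes and a square) with main term $\asymp N^{3/2}$. The minor arcs then close as you say. Take major arcs $q\le P$, $|\theta-a/q|\le P/(qN)$. On the complement, Dirichlet approximation with denominators up to $N/P$ and the degree-two Weyl bound $|T|^2\ll(N/q+\sqrt N+q)\log q$ give $\sup_{\minorarc}|T|\ll(N\log N/P)^{1/2}$. Combined with Cauchy--Schwarz and Parseval for $S_1,S_2$, this shows that $B>3$ suffices.

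The singular series is even simpler than you indicate. One has $\sum_{a\bmod\ell}^{*}\sum_{r\bmod\ell}e(ar^2/\ell)=0$ for every prime $\ell$, so only $q=1$ survives, together with $q=2$ if you restrict to even $k$.

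Your route is short, classical, and enough for density. However, it only counts partners on average over the fixed prime, so it cannot give the almost-all, many-partners statement of Theorem~\ref{thm:main}. That pointwise information is what the paper's extra machinery buys.

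Three details need repair.
\begin{enumerate}
\item With $q\in(N,2N]$, the ratio $p/q$ ranges over $(\gamma/2,\gamma+\eta)$, not a short interval about $\gamma$. Take $q\in(N,(1+\eta)N]$ instead.
\item In the difference case, $p-q$ is even for odd primes, so $k$ must again be even, not odd. With $k$ odd there are no solutions: the integrand $S_1\overline{S_2}\,\overline{T}$ changes sign under $\theta\mapsto\theta+\tfrac12$, so the integral vanishes identically.
\item The condition $p-q=k^2>0$ needs $\gamma>1+\eta$, with the $k$-range taken around $k^2\approx(\gamma-1)N$. Treat $I\subset(0,1)$ by exchanging $p$ and $q$, and shrink an interval containing $1$ to one side of $1$.
\end{enumerate}
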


This theorem gives only the density statement.  Our full result says more.
In every open interval, almost every fixed index has many partners.
We now introduce the notation needed for the stronger statement.
For a nonempty open interval $I\subset\mathbb R_{>0}$, define
\begin{align*}
 r_{\Sigma,D}(n,I)
 &:=\#\left\{m\geq1:\frac mn\in I,\ p_m+p_n\text{ is a square}\right\},\\
 r_{\Sigma,N}(m,I)
 &:=\#\left\{n\geq1:\frac mn\in I,\ p_m+p_n\text{ is a square}\right\},\\
 r_{\Delta,D}(n,I)
 &:=\#\left\{m\geq1:\frac mn\in I,\
 |p_m-p_n|\text{ is a square}\right\},\\
 r_{\Delta,N}(m,I)
 &:=\#\left\{n\geq1:\frac mn\in I,\
 |p_m-p_n|\text{ is a square}\right\}.
\end{align*}

\begin{theorem}[Quantitative version]\label{thm:main}
For every nonempty open interval $I\subset\mathbb R_{>0}$,
there is a constant $c_I>0$ such that, for every fixed $L>0$,
as $N\to\infty$,
\begin{align}
 \#\left\{n\leq N:r_{\Sigma,D}(n,I)\geq
 c_I\frac{\sqrt n}{(\log(2n))^{3/2}}\right\}
 &=N+O_{I,L}\left(\frac{N}{(\log N)^L}\right),
 \label{sum-many-D}\\
 \#\left\{m\leq N:r_{\Sigma,N}(m,I)\geq
 c_I\frac{\sqrt m}{(\log(2m))^{3/2}}\right\}
 &=N+O_{I,L}\left(\frac{N}{(\log N)^L}\right).
 \label{sum-many-N}
\end{align}
The same constant $c_I$ may be chosen so that
\begin{align}
 \#\left\{n\leq N:r_{\Delta,D}(n,I)\geq
 c_I\frac{\sqrt n}{(\log(2n))^{3/2}}\right\}
 &=N+O_{I,L}\left(\frac{N}{(\log N)^L}\right),
 \label{difference-many-D}\\
 \#\left\{m\leq N:r_{\Delta,N}(m,I)\geq
 c_I\frac{\sqrt m}{(\log(2m))^{3/2}}\right\}
 &=N+O_{I,L}\left(\frac{N}{(\log N)^L}\right).
 \label{difference-many-N}
\end{align}
Here $I$ and $L$ are fixed as $N\to\infty$.
\end{theorem}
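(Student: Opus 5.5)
The plan is to reduce each of the four counts to a problem of representing a fixed integer as a square plus or minus a prime lying in a short multiplicative window. We then show, by a mean-square (Bessel-type) circle method argument, that almost every integer in a dyadic range has the expected number of representations. Finally we transfer the "almost all integers" statement to "almost all primes $p_n$".

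\emph{Step 1: reduction to prime windows.} Fix a nonempty open interval $I$ and choose a closed interval $[a,b]\subset I$ with $0<a<b$ and a small margin. By the prime number theorem, $p_j=j\log j\,(1+O(1/\log j))$. Hence, uniformly for $m/n$ in a compact subset of $\mathbb R_{>0}$,
\begin{equation*}
 \frac{p_m}{p_n}=\frac mn\Bigl(1+O\Bigl(\frac{1}{\log n}\Bigr)\Bigr).
\end{equation*}
So for $n$ large, every prime $q\in(a'p_n,b'p_n)$ (with $a<a'<b'<b$ fixed) is $p_m$ for some $m$ with $m/n\in I$. Thus $r_{\Sigma,D}(n,I)$ is at least the number of $k$ with $k^2-p_n$ prime and lying in $(a'p_n,b'p_n)$.

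The other three counts reduce in the same way:
\begin{itemize}
\item $r_{\Sigma,N}$: count primes $q=k^2-p_m\in(p_m/b',p_m/a')$;
\item $r_{\Delta,\cdot}$: count $q=p\pm k^2$ in the appropriate window. When the window lies below $1$ we use $q=p-k^2$, and when it lies above $1$ we use $q=p+k^2$. We may shrink $[a',b']$ so that it avoids $1$.
\end{itemize}
In every case the problem is the following. For an integer $h\asymp X$, count
\begin{equation*}
 R(h)=\sum_{\substack{k^2\mp h\in(\lambda X,\mu X)\\ k^2\mp h\ \text{prime}}}\log(k^2\mp h),
\end{equation*}
and show $R(h)\gg \mathfrak S(h)\sqrt X$ for all $h\le X$ outside a set of size $O(X/(\log X)^{L+1})$.

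\emph{Step 2: circle method in mean square.} Write $R(h)=\int_0^1 S(\alpha)W(\alpha)e(\mp h\alpha)\,\dif\alpha$, where $S$ is the weighted exponential sum over primes in $(\lambda X,\mu X)$ and $W$ is the sum of $e(\pm k^2\alpha)$ over the relevant range $k\asymp\sqrt X$. Take major arcs $\majorarc$ with $q\le P=(\log X)^B$ and $|\alpha-a/q|\le P/X$, and let $\minorarc$ be the complement.
\begin{itemize}
\item On $\majorarc$: apply Siegel--Walfisz to $S$ and the standard approximation of $W$ by Gauss sums. This gives $\mathfrak S(h)\,c\sqrt X$ plus an error $O(\sqrt X(\log X)^{-A})$ for every $h$, where $c>0$ comes from the singular integral (positive because the window has positive length).
\item On $\minorarc$: by Bessel's inequality,
\begin{equation*}
 \sum_{h\le X}\Bigl|\int_\minorarc SWe(\mp h\alpha)\Bigr|^2\le \sup_\minorarc|W|^2\int_0^1|S|^2\ll \Bigl(\frac{\sqrt X}{(\log X)^{A}}\Bigr)^2 X\log X.
\end{equation*}
Here $\sup_\minorarc|W|$ is bounded by Weyl's inequality for quadratic sums (with Dirichlet approximation at the boundary $P$).
\end{itemize}
Chebyshev's inequality then shows that the minor-arc contribution exceeds $\sqrt X/(\log X)^{C}$ for at most $X(\log X)^{-L-1}$ values of $h$, once $A$ (hence $B$) is taken large in terms of $L$ and $C$.

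\emph{Step 3: the singular series and transfer to primes.} For $h=p$ an odd prime, the local factor of $\mathfrak S(p)$ at $2$ is positive, since $k$ of the right parity exists. For $\ell\nmid p$ the factor is $1-\chi_\ell(\pm p)/(\ell-1)+O(\ell^{-2})$. Hence
\begin{equation*}
 \mathfrak S(p)\gg\prod_{\ell\le \log X}(1-1/\ell)\gg 1/\log\log X.
\end{equation*}
Alternatively, one can obtain $\mathfrak S(p)\gg(\log X)^{-1/2}$ outright; this is wasteful but matches the stated exponent $3/2$. Combining the main term $\gg\sqrt X(\log X)^{-1/2}$ with the removal of the weight $\log$, we get
\begin{equation*}
 r\gg \frac{\sqrt X}{(\log X)^{3/2}}
\end{equation*}
for every prime $p\in(X,2X]$ outside the exceptional set.

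Since there are $\asymp X/\log X$ primes in $(X,2X]$, the exceptional primes form a proportion $O((\log X)^{-L})$. Summing over dyadic $X\le p_N$ and converting back to indices via $p_n\asymp n\log n$ gives \eqref{sum-many-D}--\eqref{difference-many-N}. The constant $c_I$ depends only on $a',b'$, so it can be taken common to all four statements.

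The main obstacle I expect is Step 2's minor-arc bound: it must be uniform in the shift $\pm h$ and in the two sign cases. In particular, we need a Weyl-type saving of an arbitrary power of $\log X$ for $W$ on $\minorarc$, and the approximation of $W$ and $S$ on the major arcs must hold uniformly as the window moves with $h$. I would first try the standard argument of Mikawa type (exceptional sets for $h=k^2\pm q$), using Weyl plus Dirichlet. I would fall back on a Vinogradov-type bound for $S$ instead, putting the $L^2$ norm on $W$, if the Weyl saving proves insufficient.
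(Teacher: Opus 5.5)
Your route differs from the paper's. You use narrow major arcs $|\alpha-a/q|\le P/X$ with $q\le P=(\log X)^B$, where Siegel--Walfisz suffices. The paper, following Baier--Zhao, uses arcs of width $1/(qQ)$ with $Q=x^{1-\delta}$, and therefore needs Gallagher's lemma, Mikawa's short-interval estimate and the weighted $T_1E_2$ bound. The minor arcs, which you flag as the main risk, are harmless in your setup. Dirichlet's theorem with parameter $X/P$ places every $\alpha\in\mathfrak m$ within $P/(qX)$ of some $a/q$ with $P<q\le X/P$, and Weyl's inequality then gives $|W(\alpha)|^2\ll X\log X/P$.

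\emph{First gap: the singular series tail.} The major arcs produce only the truncated series $\sum_{q\le P,\,2\nmid q}\mu(q)\varphi(q)^{-1}(\pm h/q)$. Your claim that this equals $\mathfrak S(h)+O((\log X)^{-A})$ for every $h$ is unjustified. The character $(\pm h/\cdot)$ has conductor $\asymp X$, vastly larger than $P$, so Siegel--Walfisz says nothing. The series converges only conditionally, and its tail over $q>P$ is out of reach for an individual $h$. What is missing is a mean-square bound $\sum_{h\le X,\ \mu^2(h)=1,\ h>1}|\mathcal T(h,P)|^2\ll X(\log X)^{-C}$ for the tail. Its proof needs real input: the diagonal terms and P\'olya--Vinogradov for $P<q\le\sqrt X$, Heath-Brown's large sieve for real characters in the middle range, and a Siegel--Walfisz bound for the twisted M\"obius sum beyond $\exp(X^{\eta/2})$. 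This is Lemma~\ref{lem:tail-bound}; via Chebyshev it contributes a further $O(X(\log X)^{-C})$ exceptional shifts.

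\emph{Second gap: the lower bound in Step 3.} The bound $\mathfrak S(p)\gg\prod_{\ell\le\log X}(1-1/\ell)$ silently discards the conditionally convergent product over $\ell>\log X$. In fact $\mathfrak S_\pm(p)\asymp L(1,\chi)^{-1}$ for the relevant real character $\chi$ of conductor at most $4p$. So a pointwise bound $\gg1/\log\log X$ is of GRH strength, and $\gg(\log X)^{-1/2}$ ``outright'' is not known. The correct unconditional input is $L(1,\chi)\ll\log(2p)$, giving $\mathfrak S_\pm(p)\gg1/\log p$ (Lemma~\ref{lem:series-lower}). This is enough: it yields $\gg\sqrt X/(\log X)^2$ partner primes, and since $X\asymp n\log n$ this is exactly $\sqrt n/(\log n)^{3/2}$. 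Your exponent bookkeeping conflates $X$ with the index $n$.

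\emph{Block size.} Bessel's inequality needs a generating function independent of the shift, so the prime window and the $k$-range must be fixed on each block of shifts. Dyadic blocks $(X,2X]$ are then too coarse for a short interval $I$. Use blocks $(b_1X,b_2X]$ with $b_2/b_1$ close to $1$, as in Lemmas~\ref{lem:good-small} and~\ref{lem:good-large}.

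With these repairs your narrow-arc argument should go through, and it would avoid the short-interval machinery of Section~\ref{sec:negative}.
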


Theorem~\ref{thm:density}  follows immediately from
Theorem~\ref{thm:main}.

\begin{remark}
The following heuristic suggests such an average result.
When $m,n\asymp N$, the sum $p_m+p_n$ has size about $N\log N$.
The same is true of $|p_m-p_n|$ when $m/n$ stays away from $1$.
The chance that an integer of this size is a square is about $1/\sqrt{N\log N}$.
Since there are about $N$ possible choices for the other index, this suggests about $\sqrt{N/\log N}$ representations.
\end{remark}

The analytic input for our argument consists of mean square
estimates for quadratic prime values, extending the work of
Baier and Zhao~\cite{BaierZhao2007}.
In the broader setting of the Birch--Goldbach problem, Liu and
the second named author~\cite{LiuXieForms} study nonsingular systems of forms of
differing degrees in sufficiently many prime variables.
Related developments include their saving-transfer treatment
of enlarged major arcs~\cite{LiuXieSavingTransfer} and their
bounds for multiple Gauss sums~\cite{LiuXieGauss}.
Here we consider the mixed equations $p+q=h^2$ and $|p-q|=h^2$,
with $p,q$ prime and $h$ an integer.
Our aim is to obtain many partners for almost every prime
endpoint, and we proceed by averaging over the shift in the
associated quadratic prime value problems.

\section{Estimates for quadratic progressions}\label{sec:input}

We first change the condition on the indices into a condition on the
sizes of the primes.  Let $u$ and $v$ lie in fixed compact subintervals
of $\mathbb R_{>0}$.  The prime number theorem gives the following
estimate uniformly
\begin{equation}\label{index-transfer}
 \frac{\pi(uX)}{\pi(vX)}
 =\frac{u}{v}\frac{\log(vX)}{\log(uX)}(1+o(1))
 =\frac{u}{v}+o(1).
\end{equation}
Consequently, for any fixed $R>0$ and any open interval $I$
containing $R$, a sufficiently thin fixed rectangle about
$(RX,X)$ has the following property: every prime pair
$(p_m,p_n)$ in the rectangle satisfies $m/n\in I$ for all
sufficiently large $X$.

The square conditions naturally lead to three quadratic prime value problems. After fixing one of the two primes and writing the square as $h^2$,
we express the other prime in one of the forms $h^2+k$, $h^2-k$,
and $k-h^2$, depending on the sign and on which prime is fixed.
For \(h^2+k\) in the range \(y\leq x^2\), the required mean square estimate is due to Baier and Zhao~\cite{BaierZhao2007}; their later work gives stronger positive-shift results in other ranges~\cite{BaierZhao2009}. Our application requires the extended range \(y\leq K_0x^2\) for each fixed \(K_0>1\), as well as analogous estimates for \(h^2-k\) and \(k-h^2\). These estimates, together with the corresponding local and archimedean factors, are established in Section~\ref{sec:negative}.

For $u\in\mathbb R$, write $u_+:=\max\{u,0\}$.  We set
$\Lambda(t)=0$ when $t\leq0$.  We also put
\begin{equation}\label{H0-definition}
 H_0(x,k):=\min\{x,\sqrt{k}\}.
\end{equation}
For every positive integer $k$, we define
\begin{align}
 \mathfrak S_+(k)
  &:=\prod_{p>2}\left(1-\frac{\left(\frac{-k}{p}\right)}{p-1}\right),
  \label{positive-series}\\
 \mathfrak S_-(k)
  &:=\prod_{p>2}\left(1-\frac{\left(\frac{k}{p}\right)}{p-1}\right).
  \label{negative-series}
\end{align}
Here $\left(\frac{a}{p}\right)$ denotes the Legendre symbol
for an integer $a$ and an odd prime $p$.

\begin{theorem}[Extension of the theorem of Baier and Zhao to a fixed multiple]
\label{thm:positive-main}
Let $K_0\geq1$ be fixed.  Given $A,B>0$, uniformly for
\[
 x^2(\log x)^{-A}\leq y\leq K_0x^2,
\]
one has
\begin{equation*}\label{positive-mean-square}
 \sum_{\substack{k\leq y\\ \mu^2(k)=1}}
 \left|
   \sum_{h\leq x}\Lambda(h^2+k)-\mathfrak S_+(k)x
 \right|^2
 \ll_{A,B,K_0}\frac{yx^2}{(\log x)^B}.
\end{equation*}
\end{theorem}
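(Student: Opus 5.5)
We follow the large-sieve/circle-method argument of Baier and Zhao~\cite{BaierZhao2007}, keeping track of the fact that $y$ may now exceed $x^2$ by the fixed factor $K_0$. Expanding the square, the left side equals
\[
 \Sigma_2-2x\Sigma_1+x^2\Sigma_0,
\]
where
\[
 \Sigma_2=\sum_{k\le y}\mu^2(k)\Bigl|\sum_{h\le x}\Lambda(h^2+k)\Bigr|^2,\qquad
 \Sigma_1=\sum_{k\le y}\mu^2(k)\,\mathfrak S_+(k)\sum_{h\le x}\Lambda(h^2+k),\qquad
 \Sigma_0=\sum_{k\le y}\mu^2(k)\,\mathfrak S_+(k)^2 .
\]
It suffices to show that each of $\Sigma_2$, $x\Sigma_1$ and $x^2\Sigma_0$ equals a common main term $\mathcal C\,yx^2$, up to an error $O(yx^2(\log x)^{-B})$.

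\emph{The terms $\Sigma_0$ and $\Sigma_1$.} For $\Sigma_0$ we truncate the Euler product $\mathfrak S_+(k)$ at $p\le z=(\log x)^{C}$. The tail can be controlled in mean square over squarefree $k$ by large-sieve bounds for the quadratic characters $\left(\frac{-k}{p}\right)$. The truncated product then leads to sums of $\mu^2(k)$ in residue classes, which are evaluated directly.

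For $\Sigma_1$ we interchange summations. For each $h\le x$, the variable $n=h^2+k$ runs over the interval $(h^2,h^2+y]$, whose length $y\asymp x^2$ is comparable to its location. We insert the truncated singular series and detect $\mu^2(k)=\mu^2(n-h^2)$ through divisors $d^2\mid n-h^2$ with $d\le (\log x)^C$; the larger $d$ are bounded trivially. This reduces $\Sigma_1$ to the Siegel--Walfisz theorem for primes in progressions with moduli $\le(\log x)^{C'}$ over intervals of length $\asymp x^2$. The enlarged range $y\le K_0x^2$ only changes the interval lengths by a bounded factor, so this step is routine.

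\emph{The term $\Sigma_2$.} This is the substantive part. Write
\[
 \Sigma_2=\sum_{h_1,h_2\le x}\ \sum_{k\le y}\mu^2(k)\,\Lambda(h_1^2+k)\Lambda(h_2^2+k).
\]
The diagonal $h_1=h_2$ contributes $\ll xy\log^2 x$, which is acceptable. For $h_1\ne h_2$ we set $n=h_1^2+k$ and $j=h_2^2-h_1^2$, so the inner sum becomes a prime-pair (binary additive) count
\[
 \sum_{h_1^2<n\le h_1^2+y}\Lambda(n)\,\Lambda(n+j)\,\mu^2(n-h_1^2),
\]
with shifts $j$ of size $\le x^2$. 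We treat it by the circle method, as Baier and Zhao do. On the major arcs, with $q\le(\log x)^{C}$, we obtain the Hardy--Littlewood main term with its singular series $\mathfrak S(j)$, twisted by the squarefree condition. On the minor arcs we average over $h_1,h_2$ and need a bound of the shape
\[
 \int_{\minorarc}|S(\alpha)|^2\,\Bigl|\sum_{h\le x}e(\alpha h^2)\Bigr|^2\,\dif\alpha\ \ll\ \frac{yx^2}{(\log x)^{B}},
\]
where $S(\alpha)=\sum_{n\le x^2+y}\Lambda(n)e(\alpha n)$. We prove it from Weyl's bound for the quadratic exponential sum on the minor arcs together with Parseval for $S$. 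Since $y\le K_0x^2$, the length $x^2+y\ll x^2$ keeps every Weyl-type saving intact; only the implied constants depend on $K_0$. Finally we sum the major-arc main term $\mathfrak S(h_2^2-h_1^2)$ over $h_1,h_2$, expanding $\mathfrak S$ into Ramanujan sums and counting solutions of $h_2^2-h_1^2\equiv a\pmod q$ via Gauss sums. This should reproduce $\mathcal C\,yx^2$ with the same local densities as in $\Sigma_0$ and $\Sigma_1$.

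\emph{Main obstacle.} We expect the hardest step to be this final matching of the local factors in $\Sigma_2$ with the square of $\mathfrak S_+(k)$ averaged over squarefree $k$. One must check that the archimedean factor is still exactly $y\cdot x^2$, which holds because every $h\le x$ admits the full range $k\le y$ irrespective of whether $y<x^2$ or $y>x^2$. One must also check that the prime $2$ and the squarefree weight produce identical constants in all three sums. For the latter we would verify the Euler factors prime by prime, handling $p=2$ separately.
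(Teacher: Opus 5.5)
Your plans for $\Sigma_0$ and $\Sigma_1$ are plausible, since they use the same mean-square control of the singular-series tail as Lemma~\ref{lem:tail-bound}. The treatment of $\Sigma_2$, however, has a genuine gap: the minor-arc integral you bound is not the minor-arc part of $\Sigma_2$.

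Write $W(\alpha)=\sum_{h\le x}e(\alpha h^2)$ and $P(k)=\sum_{h\le x}\Lambda(h^2+k)$. Then $S(\alpha)\overline{W(\alpha)}=\sum_k\tilde P(k)e(k\alpha)$, where $\tilde P(k)=\sum_{h\le x,\ 1\le h^2+k\le x^2+y}\Lambda(h^2+k)$. By Parseval,
\[
 \int_0^1|S(\alpha)|^2|W(\alpha)|^2\,\dif\alpha=\sum_{k\in\mathbb Z}|\tilde P(k)|^2 .
\]
This sum carries no weight $\mu^2(k)$, and it runs over all $k$ with $1-x^2<k<x^2+y$, not just $1\le k\le y$. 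Already its archimedean factor is $\sum_k\#\{h\le x:1\le h^2+k\le x^2+y\}^2\approx yx^2+\tfrac23x^4$. So the extra ranges are of the same order as the main term $yx^2$, or larger.

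The actual $\Sigma_2$ carries the $h_1$-dependent cutoff $h_1^2<n\le h_1^2+y$ and the weight $\mu^2(n-h_1^2)$. After summing over $h_1,h_2$, its generating function is $\sum_{k\le y}\mu^2(k)P(k)e(k\alpha)$ times the conjugate of $\sum_{k\le y}P(k)e(k\alpha)$. That is a frequency truncation of $S\overline W$, not a product of $S$ and $W$, and Weyl's inequality says nothing about it on $\minorarc$. So your major arcs (computed pair by pair with the correct cutoffs, which is how you obtain the archimedean factor $yx^2$) and your minor arcs belong to two different generating functions. Nothing in the proposal connects them.

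The way to connect them is to dissect pointwise in $k$, which is what Baier and Zhao, and the paper, actually do. For $1\le k\le y$ one has exactly $P(k)=\int_0^1S(\alpha)\overline{W(\alpha)}e(-k\alpha)\,\dif\alpha$, because $h^2+k\le x^2+y$ automatically. Split $P=P_{\majorarc}+P_{\minorarc}$. Bessel's inequality then gives $\sum_{k\le y}\mu^2(k)|P_{\minorarc}(k)|^2\le\int_{\minorarc}|S|^2|W|^2$, and this is the only legitimate role for your Weyl--Parseval bound.

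You must then evaluate $\sum_{k\le y}\mu^2(k)|P_{\majorarc}(k)|^2$. The natural route is to show that $P_{\majorarc}(k)-x\mathfrak S_+(k)$ is small in mean square over squarefree $k$. That is precisely the paper's proof:
\begin{itemize}
\item the regrouping \eqref{recombined-major-decomposition};
\item the principal-arc evaluation of Lemma~\ref{lem:principal-arcs}, together with the tail bound of Lemma~\ref{lem:tail-bound};
\item Lemmas~\ref{lem:E2}, \ref{lem:E1} and~\ref{lem:mixed-major} for the remaining major-arc terms;
\item Lemma~\ref{lem:minor-arcs} for the minor arcs.
\end{itemize}
Once this is done, the theorem follows from Bessel's inequality without ever expanding the square. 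So $\Sigma_1$, $\Sigma_0$, and the prime-by-prime matching of local factors that you single out as the main obstacle are not needed at all.
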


\begin{theorem}[Estimate for the negative shift]\label{thm:negative-main}
Given $A,B>0$, uniformly for
\[
 x^2(\log x)^{-A}\leq y\leq x^2,
\]
one has
\begin{equation*}\label{negative-mean-square}
 \sum_{\substack{k\leq y\\ \mu^2(k)=1\\k>1}}
 \left|
   \sum_{h\leq x}\Lambda(h^2-k)
   -\mathfrak S_-(k)(x-\sqrt{k})_+
 \right|^2
 \ll_{A,B}\frac{yx^2}{(\log x)^B}.
\end{equation*}
\end{theorem}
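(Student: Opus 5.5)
We prove Theorem~\ref{thm:negative-main} by the Baier--Zhao argument for $h^2+k$, adapted to the shift $h^2-k$. Expanding the square gives three sums. Let
\[
 \Sigma_1=\sum_{k}\Bigl|\sum_{h\le x}\Lambda(h^2-k)\Bigr|^2,\qquad
 \Sigma_2=\sum_k \mathfrak S_-(k)(x-\sqrt k)_+\sum_{h\le x}\Lambda(h^2-k),\qquad
 \Sigma_3=\sum_k \mathfrak S_-(k)^2(x-\sqrt k)_+^2 ,
\]
with $k$ running over squarefree $k\le y$, $k>1$. We evaluate each asymptotically with error $\ll yx^2(\log x)^{-B}$ and show that the main terms cancel, $\Sigma_1-2\Sigma_2+\Sigma_3=o$. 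Note that $\Lambda(h^2-k)=0$ unless $h>\sqrt k$; this is why the weight is $(x-\sqrt k)_+$.

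\textbf{Evaluating $\Sigma_3$ and $\Sigma_2$.} The sum $\Sigma_3$ is a smooth average of $\mathfrak S_-(k)^2$ over squarefree $k$. We truncate the Euler product at $p\le z=(\log x)^{C}$. The tail is controlled on average over $k$ by the large sieve for real characters (Heath-Brown's quadratic large sieve), or more simply by a second-moment bound for $\sum_{z<p}\left(\frac{k}{p}\right)/p$. What remains is a periodic function of $k$ modulo $\prod_{p\le z}p$, which we average directly.

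For $\Sigma_2$ we swap the order of summation. For each fixed $h$, the sum over $k$ becomes a sum over primes $q=h^2-k$ in the interval $[h^2-y,h^2)$, with squarefree weights and the truncated singular series. The truncated series $\mathfrak S_-$ depends only on $k=h^2-q$ modulo small moduli. So we need primes $q$ in arithmetic progressions with moduli $\le(\log x)^{C'}$, over intervals of length $\asymp y$. Siegel--Walfisz handles this, since $y\ge x^2(\log x)^{-A}$ gives interval lengths $\gg h^2(\log)^{-A}$ whenever $h\gg\sqrt y$. Small $h$ contribute negligibly. The squarefree condition is handled by the usual M\"obius expansion $\mu^2(k)=\sum_{d^2\mid k}\mu(d)$, splitting $d$ into small and large ranges. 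The result is a main term equal to the main term of $\Sigma_3$. This uses the identity that the average over residues of the local density matches $\mathfrak S_-(k)^2$ on average, a local computation with Legendre symbols.

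\textbf{Evaluating $\Sigma_1$: the main obstacle.} Expanding $\Sigma_1$ gives
\[
 \Sigma_1=\sum_{h_1,h_2\le x}\ \sum_{k}\Lambda(h_1^2-k)\Lambda(h_2^2-k).
\]
The diagonal $h_1=h_2$ contributes $\ll x\,y\log x$, which is acceptable. For $h_1\neq h_2$ we need to count prime pairs $p_1,p_2$ with $p_2-p_1=h_2^2-h_1^2$, averaged over $(h_1,h_2)$. Following Baier--Zhao, we write this count as $\int_0^1|S(\alpha)|^2\,\Bigl|\sum_h e(\alpha h^2)\Bigr|^2$-type expressions, where $S(\alpha)$ is the prime exponential sum over an interval of length $y$, and then apply the circle method.

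On the major arcs ($q\le(\log x)^{C}$) we use Siegel--Walfisz together with the standard Gauss-sum evaluation. This produces the Hardy--Littlewood twin-prime singular series averaged over the differences $h_2^2-h_1^2$, and that average reproduces the common main term.

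On the minor arcs we bound the contribution by $\sup_{\mathfrak m}|W(\alpha)|\int|S|^2$-type estimates, with $W(\alpha)=\sum_{h\le x}e(\alpha h^2)$. Weyl's inequality gives $W\ll x(\log x)^{-C/2}$, and Parseval gives $\int|S|^2\ll y\log x$. Putting these together yields
\[
 \ll x\cdot x(\log x)^{-C/2}\cdot y\log x,
\]
which is acceptable once $C$ is large.

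The genuinely new difficulty compared with the positive shift is the archimedean one: the constraint $h^2-k>0$ introduces cutoffs $h_i>\sqrt k$ that couple $k$ and $h$. We handle these by dyadically (or finely, in pieces of relative length $(\log x)^{-B}$) partitioning $k$, and on each piece replacing $(x-\sqrt k)_+$ by a constant, at acceptable cost. This also explains why the range is restricted to $y\le x^2$: it guarantees that the interval for $h$ is nonempty.

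The step we expect to be hardest is matching the main terms exactly. This requires verifying the local identity that the average over $k$ of $\mathfrak S_-(k)^2$ equals the averaged twin-prime singular series coming from the major arcs, including the $p=2$ and squarefree factors. We would check this prime by prime, using the character sum $\sum_{a\bmod p}\left(\frac{a}{p}\right)\left(\frac{a+c}{p}\right)=-1$ for $p\nmid c$.
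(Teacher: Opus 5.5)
Your route (expanding the square into $\Sigma_1-2\Sigma_2+\Sigma_3$) differs from the paper's, and it has a genuine gap at exactly the step you call the main obstacle. The identity $\sum_{h_1,h_2}\sum_k\Lambda(h_1^2-k)\Lambda(h_2^2-k)=\int_0^1|S(\alpha)|^2|W(\alpha)|^2\,\dif\alpha$ holds only when the common value $k=h_1^2-m_1=h_2^2-m_2$ is unrestricted. In $\Sigma_1$, however, $k$ must be squarefree and lie in $(1,y]$.

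These are conditions on $k$, not on the $h_i$ or $m_i$ separately, so no choice of ranges for $S$ and $W$ imposes them. The admissible primes $h^2-k$ lie in $[h^2-y,h^2-2]$, which moves with $h$, so there is no single ``interval of length $y$''. If you take $S=S_{1,x^2}$, the integral also counts every $k\le0$, which is a positive-shift problem of the same order $x^4$. It also counts every non-squarefree $k$, which changes the local factors at $p^2$. So its main term is not that of $\Sigma_3$, and nothing cancels. Partitioning $k$ and freezing $(x-\sqrt k)_+$ does not help, because in $\Sigma_1$ the constraint sits in the support, not in a weight.

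To impose it within the circle method, you must write $\Sigma_1=\sum_{1<k\le y,\ \mu^2(k)=1}|\mathcal P(k)|^2$ with $\mathcal P(k)=\int_0^1S_{1,x^2}(\alpha)S_{2,-1}(\alpha)e(k\alpha)\,\dif\alpha$. Then split $\mathcal P=\mathcal P_{\mathfrak M}+\mathcal P_{\mathfrak m}$, bound $\sum_k|\mathcal P_{\mathfrak m}(k)|^2$ by Bessel, and evaluate $\mathcal P_{\mathfrak M}(k)$ for individual $k$ up to errors that are small in mean square. But that evaluation, $\mathcal P_{\mathfrak M}(k)=(x-\sqrt k)_+\sum_{q\le Q_1,\,2\nmid q}\frac{\mu(q)}{\varphi(q)}\left(\frac kq\right)+(\text{small in mean square})$, together with a mean-square bound for the singular-series tail, already proves the theorem. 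Then $\Sigma_2$, $\Sigma_3$ and the main-term matching you flag as hardest become unnecessary.

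This is the paper's argument. It never expands the square. It bounds the mean square of $\mathcal P(k)-\mathfrak S_-(k)(x-\sqrt k)_+$ directly: Bessel handles the minor arcs and each nonprincipal piece of \eqref{recombined-major-decomposition}, and the principal term is evaluated pointwise. The archimedean factor comes for free, because $S_{1,x^2}$ runs over $m\ge1$ and, after completing the $\beta$-integral to a full period, orthogonality counts $h\in(\sqrt k,x]$ with $(h,q)=d$ exactly. The paper's arcs have width $1/(qQ)$ with $Q=x^{1-\delta}$, which its Parseval-based completion requires. On arcs that wide, Siegel--Walfisz alone does not suffice, so the paper uses Gallagher's lemma with Mikawa's short-interval estimate (Lemma~\ref{lem:E1}) and a weighted P\'olya--Vinogradov bound for $T_{1,z}E_{2,s}$ (Lemma~\ref{lem:E2}).

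A second, independent problem affects $\Sigma_2$ and $\Sigma_3$. Truncating the Euler product at $p\le z=(\log x)^C$ does not give a function of $k$ modulo small moduli. Indeed $\prod_{2<p\le z}\bigl(1-\frac{(k/p)}{p-1}\bigr)=\sum_{q\mid P_z}\frac{\mu(q)}{\varphi(q)}\left(\frac kq\right)$ with $P_z=\prod_{2<p\le z}p=\exp((1+o(1))(\log x)^C)$. In $\Sigma_2$ you would therefore need primes in progressions to moduli far beyond any power of $x$, and in $\Sigma_3$ the period is far longer than $y$. If instead $z\ll\log x$, a second-moment bound for the tail saves only a factor $\asymp z\log z$, not $(\log x)^B$.

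What works is truncating the Dirichlet series \eqref{negative-series-expansion} at $q\le Q_1=(\log x)^c$, each term of which has modulus at most $Q_1$. The tail is then bounded in mean square over squarefree $k$ as in Lemma~\ref{lem:tail-bound}: P\'olya--Vinogradov for $q\le\sqrt y$, Heath-Brown's quadratic large sieve up to $\exp(y^{\eta/2})$, and beyond that an ineffective Siegel--Walfisz bound for $\sum_{q>W}\mu(q)\chi(q)/\varphi(q)$. That last bound is available because the conductor is at most $4k\le(\log W)^{O(1)}$, and your sketch omits it.
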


\begin{theorem}[Estimate for the reverse shift]\label{thm:reverse-main}
Let $K_0\geq1$ be fixed.
Given $A,B>0$, uniformly for
\[
 x^2(\log x)^{-A}\leq y\leq K_0x^2,
\]
one has
\begin{equation*}\label{reverse-mean-square}
 \sum_{\substack{k\leq y\\ \mu^2(k)=1\\k>1}}
 \left|
   \sum_{h\leq x}\Lambda(k-h^2)
   -\mathfrak S_-(k)H_0(x,k)
 \right|^2
 \ll_{A,B,K_0}\frac{yx^2}{(\log x)^B}.
\end{equation*}
\end{theorem}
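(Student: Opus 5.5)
We prove Theorem~\ref{thm:reverse-main} by the circle method, following Baier and Zhao~\cite{BaierZhao2007}, and then use a dispersion (Bessel/large sieve) argument over the shift $k$. Put $P=(\log x)^{C}$ with $C=C(A,B)$ large, set $N=K_0x^2$, and write
\[
 S(\alpha)=\sum_{\ell\leq N}\Lambda(\ell)e(\ell\alpha),\qquad
 T(\alpha)=\sum_{h\leq x}e(h^2\alpha).
\]
Then for $k\leq y$ we have
\[
 R(k):=\sum_{h\leq x}\Lambda(k-h^2)=\int_0^1 S(\alpha)T(\alpha)e(-k\alpha)\,\dif\alpha .
\]
This holds because $k-h^2\leq y\leq N$, and the convention $\Lambda(t)=0$ for $t\leq0$ automatically truncates the sum at $h\leq\sqrt{k}$. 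This truncation is what produces $H_0(x,k)=\min\{x,\sqrt k\}$. It is also why the range $y\le K_0x^2$ needs no new idea: when $k>x^2$ every $h\le x$ contributes, and when $k\le x^2$ only $h<\sqrt k$ does.

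\textbf{Step 1: major arcs and the main term.} Split $[0,1]$ into major arcs $\majorarc$, consisting of $|\alpha-a/q|\leq P/(qN)$ with $q\leq P$, and their complement $\minorarc$.
\begin{itemize}
 \item On $\majorarc$, apply Siegel--Walfisz to $S$ and the standard approximation $T(a/q+\beta)\approx q^{-1}G(a,q)\int_0^x e(t^2\beta)\,\dif t$, where $G(a,q)$ is the quadratic Gauss sum.
 \item Integrating over $\beta$ gives the singular integral $\int_0^{x}\mathbf 1_{t^2<k}\,\dif t=H_0(x,k)$, up to acceptable error.
 \item The singular series is $\sum_{q\leq P}\frac{\mu(q)}{\varphi(q)q}\sum_{(a,q)=1}G(a,q)e(-ak/q)$.
\end{itemize}
For squarefree $k$, the local factor at an odd prime $p$ equals $1-\left(\frac{k}{p}\right)/(p-1)$. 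This uses $\sum_{a}G(a,p)e(-ak/p)=p\left(\frac{k}{p}\right)$ up to the usual normalization, with sign $+k$ because $k-h^2$ is prime exactly when $h^2\equiv k$ is avoided. The factor at $2$ is $1$ for $k$ odd. Squarefreeness is used to control the $p^2$ terms and the prime $2$, exactly as in the positive case $\mathfrak S_+$ of Theorem~\ref{thm:positive-main}.

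Truncating the singular series at $q\leq P$ costs an error whose mean square over $k$ is $\ll yx^2/(\log x)^B$. To see this, expand the tail as a sum of Dirichlet characters $\chi(k)$ of modulus $>P$ weighted by $1/\varphi(q)$, and apply the large sieve in $k$. This is the same device Baier--Zhao use, and it is insensitive to the sign of $k$.

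\textbf{Step 2: minor arcs.} By Bessel's inequality (Parseval in $k$),
\[
 \sum_{k\leq y}\Big|\int_{\minorarc}S T e(-k\alpha)\,\dif\alpha\Big|^2\leq\int_{\minorarc}|S(\alpha)|^2|T(\alpha)|^2\,\dif\alpha\leq \sup_{\minorarc}|T|^2\int_0^1|S|^2\ll \sup_{\minorarc}|T|^2\, N\log N .
\]
Weyl's inequality, together with Dirichlet approximation at level $x^2/P$, gives $\sup_{\minorarc}|T|\ll x P^{-1/2+\epsilon}$. The minor-arc contribution is therefore $\ll x^2 P^{-1+\epsilon}\cdot K_0x^2\log x$. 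Since $y\geq x^2(\log x)^{-A}$, this is $\ll yx^2/(\log x)^B$ once $C>A+B+2$.

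\textbf{Step 3: assembling the estimate.} Add the squared major-arc errors (Siegel--Walfisz error, Gauss-sum approximation error, singular-series tail, and singular-integral boundary error) to the minor-arc bound, and sum over $k\leq y$ with $\mu^2(k)=1$ and $k>1$.

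The main obstacle is the major-arc mean square with the truncated weight $\mathbf 1_{h^2<k}$. The boundary $h\approx\sqrt k$ varies with $k$, so the usual smooth treatment of the singular integral must be done uniformly in $k\le K_0x^2$. I would handle this first by partial summation in $h$, so that the Gauss-sum approximation is applied to $\sum_{h\leq\min(x,\sqrt k)}$. Alternatively, split the sum into the ranges $k\leq x^2$ and $x^2<k\leq K_0x^2$. In the second range, $H_0=x$ and the argument is identical to Theorem~\ref{thm:positive-main} with $k$ replaced by $-k$ in the character sums.
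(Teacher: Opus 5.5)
Your narrow-arc route differs from the paper's and can be made to work, but one step fails as you have set it up.

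\textbf{The completion step.} With arcs of width $P/(qN)$, summing over $a$ turns the coefficient for modulus $q$ into $\mu(q)\left(\frac kq\right)/\varphi(q)$. Write $v(\beta)=\sum_{\ell\le N}e(\ell\beta)$ and $w(\beta)=\int_0^x e(t^2\beta)\,\dif t$. Completing $\int_{|\beta|\le P/(qN)}v(\beta)w(\beta)e(-k\beta)\,\dif\beta$ then costs $\ll\int_{|\beta|>P/(qN)}\|\beta\|^{-3/2}\,\dif\beta\ll(qN/P)^{1/2}$. Summed over $q\le P$, this is $\asymp(N/P)^{1/2}\sum_{q\le P}q^{1/2}/\varphi(q)\asymp N^{1/2}\asymp x$, the size of the main term.

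This is not merely a lossy bound. For $q\asymp P$ the arc has width $\asymp1/N$, so the truncated integral is a smoothing of $H_0(x,k)$ at a scale comparable to $x$, and it really differs from $H_0(x,k)$ by $\asymp x$. Only cancellation of $\mu(q)\left(\frac kq\right)$ over $q$ could rescue this, and you give no such argument. So ``up to acceptable error'' is false pointwise, and squaring and summing as in your Step~3 gives only $O(yx^2)$.

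\textbf{The fix.} Use arcs of uniform width $|\alpha-a/q|\le P/N$, $q\le P$. They are still disjoint. Take Dirichlet's level to be $N/P$ rather than $x^2/P$, so that the minor arcs really have denominators $q>P$ when $K_0>1$. Then the completion error is $\ll(N/P)^{1/2}\sum_{q\le P}\varphi(q)^{-1}\ll x\log P/\sqrt P$, uniformly in $k$. Moreover $\int_{-1/2}^{1/2}v(\beta)w(\beta)e(-k\beta)\,\dif\beta=H_0(x,k)+O(\log x)$.

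\textbf{Two further points.} Your ``main obstacle'' is misidentified. The cutoff $h<\sqrt k$ comes out of the singular integral automatically, whereas putting a $k$-dependent cutoff into $T$ would destroy the Bessel step on the minor arcs. Also, the singular-series tail is not handled by a large sieve in $k$ alone, because the sum over $q$ is infinite and only conditionally convergent. One needs Heath-Brown's quadratic large sieve in the middle range, and a Siegel--Walfisz bound for $\sum_{q>W}\mu(q)\chi(q)/\varphi(q)$ in the long tail, as in the paper's tail lemma.

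\textbf{Comparison with the paper.} The paper uses wide arcs $|\beta|\le1/(qQ)$ with $Q=x^{1-\delta}$. There Siegel--Walfisz is useless, since $|\beta|z$ reaches $x^{1+\delta}$. It therefore writes $S_{1,z}=T_{1,z}+E_{1,z}+R_{q,z}$ and $S_{2,s}=T_{2,s}+E_{2,s}$. It bounds $E_{1,z}$ in mean square by Gallagher's lemma and Mikawa's short-interval estimate, and bounds $E_{2,s}$ through the weighted $T_1E_2$ estimate. It evaluates $T_1T_2$ exactly by extending to a full period, so the moving interval $\mathcal I_0(k)$ appears as a lattice-point count. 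The extension error $(qQx)^{1/2}$ is small precisely because $Q<x$.

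Your repaired route replaces all of this by three ingredients: Siegel--Walfisz, the standard approximation $T(a/q+\beta)=q^{-1}G(a,q)w(\beta)+O\bigl(q^{1/2+\varepsilon}(1+x^2|\beta|)^{1/2}\bigr)$, and the decay $|vw|\ll\|\beta\|^{-3/2}$. This suffices here because for $y\ge x^2(\log x)^{-A}$ the minor arcs need only a logarithmic saving, which Weyl's inequality supplies for denominators in $(P,N/P]$. Both proofs rest on the same mean-square bound for the singular-series tail.
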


The case $K_0=1$ of Theorem~\ref{thm:positive-main} is the original
theorem of Baier and Zhao~\cite[Theorem, p.~964]{BaierZhao2007}.
Section~\ref{sec:negative} proves the extension for fixed $K_0$ and
also proves Theorems~\ref{thm:negative-main} and
\ref{thm:reverse-main}.  Stronger results for the original positive
shift are given by Baier and Zhao~\cite{BaierZhao2009}.

\begin{remark}\label{rem:fixed-constant}
Once the interval $I$ is chosen, the number $K_0$ is fixed.  We neither
claim nor need uniformity as $K_0\to\infty$.  Each compact subinterval
of $(0,1)$ or $(1,\infty)$ requires only one fixed value of $K_0$.
\end{remark}

\begin{lemma}\label{lem:series-lower}
For either sign and every squarefree $k>1$,
\begin{equation}\label{series-lower}
 \mathfrak S_\pm(k)\gg\frac1{\log(2k)},
\end{equation}
with an absolute implied constant.
\end{lemma}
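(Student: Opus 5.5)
The plan is to compare $\mathfrak S_\pm(k)$ with $1/L(1,\chi)$ for a suitable real character $\chi$, and then to use the classical upper bound $L(1,\chi)\ll\log q$.

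\textbf{Step 1: the character.} Fix the sign and put $D=-k$ for $\mathfrak S_+$ and $D=k$ for $\mathfrak S_-$. Since $k>1$ is squarefree, $D$ is not a perfect square. Then $\chi(n)=\left(\frac{D}{n}\right)$, the Kronecker symbol, is a real Dirichlet character modulo $q\le 4k$. It is non-principal, because $D$ is not a square. For odd primes $p$, $\chi(p)$ agrees with the Legendre symbol appearing in \eqref{positive-series} and \eqref{negative-series}.

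\textbf{Step 2: comparison with the Euler product.} For each odd prime $p$ I write
\[
 1-\frac{\chi(p)}{p-1}=\Bigl(1-\frac{\chi(p)}{p}\Bigr)\bigl(1+\varepsilon_p\bigr),
 \qquad \varepsilon_p=-\frac{\chi(p)}{(p-1)(p-\chi(p))}.
\]
If $\chi(p)=0$ both sides equal $1$. If $\chi(p)=\pm1$, then $|\varepsilon_p|\le 1/(p-1)^2$ for $p\geq 5$, and at $p=3$ the quotient is $3/4$ or $3/2$. In every case $1+\varepsilon_p>0$ and $\sum_p|\varepsilon_p|<\infty$. Hence $\prod_{p>2}(1+\varepsilon_p)$ is bounded below by an absolute positive constant.

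Next I use the classical fact that for non-principal $\chi$ the Euler product $\prod_p(1-\chi(p)/p)$ converges conditionally, taking primes in increasing order, to $1/L(1,\chi)$. This follows from the prime number theorem for arithmetic progressions together with $L(1,\chi)\neq0$. The same ordering of primes is the natural reading of \eqref{positive-series} and \eqref{negative-series}. Removing the factor at $p=2$ changes the value only by the factor $(1-\chi(2)/2)^{-1}\in[2/3,2]$. Therefore
\[
 \mathfrak S_\pm(k)\gg \frac{1}{L(1,\chi)},
\]
with an absolute implied constant.

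\textbf{Step 3: bounding $L(1,\chi)$.} Split $L(1,\chi)=\sum_{n\le q}\chi(n)/n+\sum_{n>q}\chi(n)/n$. The first sum is trivially $\le 1+\log q$. The second is $O(1)$ by partial summation, since $\bigl|\sum_{n\le t}\chi(n)\bigr|\le q$. Thus $L(1,\chi)\ll\log q\ll\log(2k)$, which gives \eqref{series-lower}.

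\textbf{Main obstacle.} The only delicate point is Step 2. The infinite product in $\mathfrak S_\pm$ is not absolutely convergent, so one must justify that it equals $1/L(1,\chi)$ up to the absolutely convergent correction factor $\prod_{p>2}(1+\varepsilon_p)$. I would cite the standard conditional convergence of $\prod_p(1-\chi(p)p^{-1})$ at $s=1$ rather than reprove it. Everything else is elementary and uniform in $k$.
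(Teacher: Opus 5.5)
Your argument is the same as the paper's. You factor $\mathfrak S_\pm(k)$ as $1/L(1,\chi)$ times the factor at $2$ and an absolutely convergent product of positive terms $1+O(p^{-2})$, then use $L(1,\chi)\ll\log(2k)$.

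One small fix is needed. When $D\equiv 3\pmod 4$ (e.g.\ $D=3$ or $D=-5$), the Kronecker symbol $n\mapsto(D/n)$ is not periodic, so it is not a Dirichlet character. Take instead, as the paper does, the primitive character $\chi_{D_a}$ of the fundamental discriminant of $\mathbb Q(\sqrt D)$, or $n\mapsto(4D/n)$; either agrees with $(D/p)$ at odd $p$ and has modulus at most $4k$. Also, the $p=3$ quotient for $\chi(3)=-1$ is $9/8$, not $3/2$, which is harmless.
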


\begin{proof}
Let $a=-k$ for $\mathfrak S_+(k)$ and $a=k$ for
$\mathfrak S_-(k)$.  Let $D_a$ be the fundamental discriminant of
$\mathbb Q(\sqrt a)$.  Since $k>1$ is squarefree, the primitive real
character $\chi_{D_a}$ is nonprincipal.  Its conductor is $|D_a|$,
and $|D_a|\leq4k$.  It also satisfies
\[
 \chi_{D_a}(p)=\left(\frac ap\right)
 \qquad(p>2).
\]
Writing $\chi=\chi_{D_a}$, we have
\begin{align*}
 \mathfrak S_\pm(k)L(1,\chi)
 &=(1-\chi(2)/2)^{-1}
 \prod_{p>2}
 \frac{1-\chi(p)/(p-1)}{1-\chi(p)/p}.
\end{align*}
For $p>2$, this factor is positive and is
$1+O(p^{-2})$ uniformly for $\chi(p)\in\{-1,0,1\}$.  Thus the Euler
product is bounded above and below by positive absolute constants.
The same holds for the factor at $2$.  Hence
\[
 \mathfrak S_\pm(k)L(1,\chi_{D_a})\asymp1.
\]
The elementary bound $L(1,\chi_{D_a})\ll\log(2|D_a|)$ yields
\eqref{series-lower}.
\end{proof}

Chebyshev's inequality and Lemma~\ref{lem:series-lower} give the
following pointwise consequences of the mean value estimates.

\begin{corollary}\label{cor:pointwise}
Fix $A,C,D>0$.  In each of Theorems~\ref{thm:positive-main},
\ref{thm:negative-main}, and~\ref{thm:reverse-main}, take the saving
parameter $B$ larger than $C+2D$.
In the range of Theorem~\ref{thm:positive-main}, the following formula
holds for all but $O(y(\log x)^{-C})$ squarefree integers $k\leq y$.
\begin{equation}\label{positive-absolute-corollary}
 \sum_{h\leq x}\Lambda(h^2+k)
 =\mathfrak S_+(k)x
 +O\left(\frac{x}{(\log x)^D}\right).
\end{equation}

In the range of Theorem~\ref{thm:negative-main}, the following formula
holds for all but $O(y(\log x)^{-C})$ squarefree integers $k\leq y$
with $k>1$.
\begin{equation}\label{negative-absolute-corollary}
 \sum_{h\leq x}\Lambda(h^2-k)
 =\mathfrak S_-(k)(x-\sqrt{k})_+
+O\left(\frac{x}{(\log x)^D}\right).
\end{equation}
For any $E\geq 0$, this is a relative asymptotic whenever
\[
x-\sqrt{k}\geq x(\log x)^{-E}
\quad\hbox{and}\quad D>E+1.
\]

In the range of Theorem~\ref{thm:reverse-main}, the following formula
holds for all but $O(y(\log x)^{-C})$ squarefree integers $k\leq y$
with $k>1$.
\begin{equation}\label{reverse-absolute-corollary}
 \sum_{h\leq x}\Lambda(k-h^2)
 =\mathfrak S_-(k)H_0(x,k)
+O\left(\frac{x}{(\log x)^D}\right).
\end{equation}
For any $E\geq 0$, this is a relative asymptotic whenever
\[
H_0(x,k)\geq x(\log x)^{-E}
\quad\hbox{and}\quad D>E+1.
\]
\end{corollary}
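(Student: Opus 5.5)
The plan is to apply Chebyshev's inequality to each of the three mean square estimates, and then use Lemma~\ref{lem:series-lower} to turn the resulting absolute error into a relative one. We treat the three cases in parallel. Write $E(k)$ for the quantity inside the absolute value in the relevant theorem; for instance, $E(k)=\sum_{h\leq x}\Lambda(h^2+k)-\mathfrak S_+(k)x$ in the positive case. Call a squarefree $k\leq y$ (with $k>1$ where required) \emph{exceptional} if
\[
 |E(k)|>x(\log x)^{-D}.
\]
Each exceptional $k$ contributes more than $x^2(\log x)^{-2D}$ to the sum in the theorem. Theorems~\ref{thm:positive-main}, \ref{thm:negative-main} and~\ref{thm:reverse-main} bound that sum by $\ll yx^2(\log x)^{-B}$. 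Hence the number of exceptional $k$ is
\[
 \ll y(\log x)^{2D-B}\leq y(\log x)^{-C},
\]
because $B>C+2D$. Every non-exceptional $k$ satisfies \eqref{positive-absolute-corollary}, \eqref{negative-absolute-corollary} or \eqref{reverse-absolute-corollary}, as appropriate. This already gives the first assertion in each case. The implied constants depend only on $A,B,K_0$, since the mean square bounds are uniform in $y$ in the stated range.

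For the relative-asymptotic claims, we need the main term to dominate the error. In the negative case, suppose $x-\sqrt k\geq x(\log x)^{-E}$. Since $k\leq y\leq K_0x^2$, we have $\log(2k)\ll\log x$. Lemma~\ref{lem:series-lower} then gives $\mathfrak S_-(k)\gg(\log x)^{-1}$, so
\[
 \mathfrak S_-(k)(x-\sqrt k)_+\gg x(\log x)^{-E-1}.
\]
The error $x(\log x)^{-D}$ is therefore smaller by a factor $(\log x)^{E+1-D}=o(1)$, since $D>E+1$. Thus the sum equals the main term times $1+O((\log x)^{E+1-D})$. The reverse case is identical, with $H_0(x,k)$ replacing $(x-\sqrt k)_+$ and the same lower bound for $\mathfrak S_-(k)$, applicable because $k>1$ is squarefree.

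There is no real obstacle here. The only points to watch are that $\log(2k)\ll_{K_0}\log x$ holds throughout the range, and that the restriction $k>1$ is kept, since it is exactly what Lemma~\ref{lem:series-lower} needs.
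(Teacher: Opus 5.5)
Your proposal is correct and follows the paper's own argument: Chebyshev's inequality with threshold $x(\log x)^{-D}$ applied to each mean square estimate, which gives $O(y(\log x)^{2D-B})\subseteq O(y(\log x)^{-C})$ exceptions, followed by Lemma~\ref{lem:series-lower} (with $\log(2k)\ll\log x$) to make the error relative when $D>E+1$. Your version just writes out the details that the paper's two-line proof leaves implicit.
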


\begin{proof}
Chebyshev's inequality, applied with threshold $x(\log x)^{-D}$ in the relevant mean square estimate, gives
$O(y(\log x)^{-C})$ exceptions, provided that $B>C+2D$. The relative statements follow from
Lemma~\ref{lem:series-lower} by comparing the main terms with the
absolute errors.
\end{proof}

\section{Circle method estimates for the three quadratic polynomials}
\label{sec:negative}

We treat the three forms simultaneously.  For the tails of the
singular series, the nonprincipal term in the prime exponential sum,
and the contribution from the minor arcs, we follow the treatment
of Baier and Zhao~\cite{BaierZhao2007}.  We modify the local factors and the
archimedean factors for the two signed forms.  The quadratic phase prevents a direct application of  Gallagher's linear lemma to the quadratic
nonprincipal term.  To overcome this difficulty, we establish a weighted $T_1E_2$
estimate and use the regrouping in
\eqref{recombined-major-decomposition}.  This constitutes the principal new ingredient in
our circle method argument.

Put $e(z)=e^{2\pi iz}$.  We need two standard estimates for short intervals.  We state them in
the exact forms used below.  These forms also keep track of the end
intervals in Gallagher's lemma.

\begin{lemma}[Short-interval estimates]\label{lem:short-interval-input}
Let $2<\Delta<N/2$, $N<N'\leq2N$, and let  $(a_n)$ be any finite
sequence.  Then
\begin{equation}\label{Gallagher-exact}
 \int_{|\beta|\leq\Delta^{-1}}
 \left|\sum_{N<n\leq N'}a_ne(\beta n)\right|^2\,\dif\beta
 \ll \Delta^{-2}
 \int_{N-\Delta/2}^{N'}
 \left|
 \sum_{\max\{t,N\}<n\leq\min\{t+\Delta/2,N'\}}a_n
 \right|^2\,\dif t.
\end{equation}
The implied constant is absolute.

For $q\geq1$ and $\Delta>0$, put
\begin{equation}\label{J-definition}
 J_N(q,\Delta)=
 \sideset{}{^{\#}}\sum_{\chi\bmod q}\int_N^{2N}
 \left|\sum_{t<n\leq t+q\Delta}
       \chi(n)\Lambda(n)\right|^2\,\dif t,
\end{equation}
where $\#$ means that we replace the inner sum, in the principal-character term, by
$\sum_{t<n\leq t+q\Delta}(\Lambda(n)-1)$.
Given $\varepsilon,L,b>0$, uniformly for
\begin{equation*}\label{Wolke-Mikawa-range}
 q\leq(\log N)^b,
 \qquad N^{1/5+\varepsilon}\leq\Delta\leq N^{1-\varepsilon},
\end{equation*}
one has
\begin{equation}\label{Wolke-Mikawa-input}
 J_N(q,\Delta)\ll_{\varepsilon,L,b}
 (q\Delta)^2N(\log N)^{-L}.
\end{equation}
\end{lemma}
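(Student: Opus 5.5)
The lemma has two independent parts. I would prove \eqref{Gallagher-exact} by a Plancherel argument, and \eqref{Wolke-Mikawa-input} character by character from the explicit formula and zero-density estimates.

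\emph{Step 1: Gallagher's inequality.} Put $S(\beta)=\sum_{N<n\leq N'}a_ne(\beta n)$ and
\[
 f(t)=\sum_{\max\{t,N\}<n\leq\min\{t+\Delta/2,N'\}}a_n .
\]
Then $f$ is supported on $[N-\Delta/2,N']$. Writing $f(t)=\sum_n a_n\mathbf 1_{[n-\Delta/2,n)}(t)$, I compute its Fourier transform:
\[
 \hat f(\beta)=\int f(t)e(-\beta t)\,\dif t=\overline{S}\text{-type sum}\;\cdot\;\frac{e(\beta\Delta/2)-1}{2\pi i\beta}.
\]
More precisely, $|\hat f(\beta)|=|S(-\beta)|\,|\sin(\pi\beta\Delta/2)/(\pi\beta)|$. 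For $|\beta|\leq\Delta^{-1}$ we have $\pi|\beta|\Delta/2\leq\pi/2$, so the sine factor is $\gg\Delta$ with an absolute constant. Plancherel then gives
\[
 \Delta^2\int_{|\beta|\leq\Delta^{-1}}|S(\beta)|^2\,\dif\beta\ll\int_{\mathbb R}|\hat f|^2=\int_{N-\Delta/2}^{N'}|f(t)|^2\,\dif t .
\]
This is \eqref{Gallagher-exact}, and the end intervals come out automatically from the truncation of $f$.

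\emph{Step 2: setup for $J_N(q,\Delta)$.} There are at most $q\leq(\log N)^b$ characters. It therefore suffices to show, for each $\chi\bmod q$ and with $h=q\Delta$,
\[
 \int_N^{2N}\bigl|\psi(t+h,\chi)-\psi(t,\chi)-\delta_\chi h\bigr|^2\,\dif t\ll h^2N(\log N)^{-L-b}.
\]
Here $\delta_\chi=1$ for principal $\chi$ and $0$ otherwise. Imprimitive characters differ from the inducing primitive ones by $O(\log^2 N)$ per interval, which is negligible. Note that $h$ lies in $[N^{1/5+\varepsilon},N^{1-\varepsilon/2}]$.

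\emph{Step 3: explicit formula and the two zero regions.} I apply the truncated explicit formula with $T=N^{1+\varepsilon}/h$:
\[
 \psi(t+h,\chi)-\psi(t,\chi)-\delta_\chi h=-\sum_{|\gamma|\leq T}\frac{(t+h)^\rho-t^\rho}{\rho}+O\Bigl(\frac{N\log^2N}{T}\Bigr).
\]
Using $|((t+h)^\rho-t^\rho)/\rho|\ll\min(h,N/|\gamma|)N^{\beta-1}$, I bound the mean square of the zero sum in the standard way (Saffari--Vaughan, or Gallagher's lemma in the $\gamma$-aspect). The result is
\[
 h^2N\sum_{\sigma}N^{2(\sigma-1)}N(\sigma,T,\chi)\log N .
\]
For zeros with $\beta\leq1-\eta$ I use Huxley's density bound $N(\sigma,T,\chi)\ll(qT)^{12(1-\sigma)/5+\varepsilon}$. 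This gives a power saving exactly when $T^{12/5}\leq N^{2-\varepsilon'}$, i.e.\ $h\geq N^{1/6+\varepsilon}$, which is satisfied since $h\geq N^{1/5+\varepsilon}$. For zeros near $1$ I use the Vinogradov--Korobov zero-free region. Together with Siegel's theorem, which excludes exceptional zeros for $q\leq(\log N)^b$ at the cost of ineffective constants, this yields $N^{\beta-1}\ll\exp(-c(\log N)^{1/2})$, beating any power of $\log N$.

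\emph{Main obstacle.} I expect the hardest part to be the bookkeeping in Step 3. The saving $(\log N)^{-L}$ must hold uniformly in $q$ and $h$ across the whole range, and the $O(\varepsilon)$ losses in Huxley's bound, the truncation error and the factor $q$ from summing over characters must all be absorbed. Choosing the truncation height as $T=N^{1+\varepsilon}/h$ and splitting $\sigma$ at $1-(\log N)^{-2/3}$ should make this work. The implied constant is ineffective because of Siegel's theorem, which is acceptable here since the lemma makes no effectivity claim.
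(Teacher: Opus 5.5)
Your proposal is correct in outline, but it takes a different route from the paper. The paper proves nothing here: it quotes \eqref{Gallagher-exact} from Gallagher through \cite[Lemma~1]{BaierZhao2007}, and \eqref{Wolke-Mikawa-input} from Mikawa through \cite[Lemma~2]{BaierZhao2007}.

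Your Step~1 is in substance Gallagher's own Plancherel proof. Writing $f=\sum_n a_n\mathbf{1}_{[n-\Delta/2,n)}$ shows cleanly where the end intervals come from, and the argument uses neither $\Delta>2$ nor $\Delta<N/2$.

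Steps~2--3 replace the citation of Mikawa with a character-by-character proof: the truncated explicit formula, the standard mean-square reduction to roughly $h^2N\sum_{|\gamma|\le T}N^{2(\beta-1)}$ up to logarithms, Huxley's density theorem, a Vinogradov--Korobov zero-free region, and Siegel's theorem. Because $q\le(\log N)^b$, this works. It even gives the wider range $q\Delta\ge N^{1/6+\varepsilon}$, which is more than the paper needs, since it only applies the lemma with $\Delta=Q/2$, roughly of size $N^{1/2}$. What your route buys is a self-contained argument. What it costs is heavier input, namely a hybrid Huxley estimate and a Vinogradov--Korobov region for Dirichlet $L$-functions, where the paper needs only a reference. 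The ineffectivity from Siegel is harmless, since the paper already accepts ineffective constants in Lemma~\ref{lem:tail-bound}.

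Two points in your bookkeeping need correcting.

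\begin{itemize}
\item Huxley's bound must be used in a form whose loss is only a power of $\log(qT)$, or $(qT)^{\varepsilon(1-\sigma)}$; this is how the theorem actually reads. With the additive loss $(qT)^{\varepsilon}$ that you wrote, your split at $1-(\log N)^{-2/3}$ fails. Near the edge of the zero-free region the factor $N^{2(\beta-1)}$ is only $\exp(-O((\log N)^{1/3}))$, and that cannot absorb a loss of $T^{\varepsilon}$.
\item The Vinogradov--Korobov region gives $N^{\beta-1}\le\exp\bigl(-c(\log N)^{1/3}(\log\log N)^{-1/3}\bigr)$, not $\exp(-c(\log N)^{1/2})$. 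This weaker saving still beats every power of $\log N$, so the conclusion stands. A wider region than the classical one is genuinely needed here: for zeros of height near $T$, the classical region only gives $N^{\beta-1}\ll1$.
\end{itemize}
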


\begin{proof}
Inequality~\eqref{Gallagher-exact} is Gallagher's lemma
\cite{Gallagher1970}, in the form stated in
\cite[Lemma~1]{BaierZhao2007}.  The short-interval estimate
\eqref{Wolke-Mikawa-input} is due to Mikawa~\cite{Mikawa1991},
building on methods of Wolke~\cite{Wolke1989}; we use the formulation
in \cite[Lemma~2]{BaierZhao2007}.  In this formulation, the estimate is
uniform in $q$ throughout the stated logarithmic range, and the sum runs
over all Dirichlet characters modulo $q$, including imprimitive ones.
\end{proof}

\subsection{Local factors}

For an odd prime $p$, let
\[
 \rho_p^-(k)=\#\{r\pmod p:r^2-k\equiv0\pmod p\}.
\]
The root count is
\begin{equation*}\label{root-count-minus}
 \rho_p^-(k)=1+\left(\frac{k}{p}\right).
\end{equation*}
Thus the normalized local factor is
\begin{equation*}\label{local-factor-minus}
 \frac{1-\rho_p^-(k)/p}{1-1/p}
 =1-\frac{(k/p)}{p-1},
\end{equation*}
which proves~\eqref{negative-series}.  The same congruence appears for
$k-h^2$.  In fact, a prime divisor of $k-h^2$ satisfies
$h^2\equiv k\pmod p$.  For $h^2+k$, the congruence is
$h^2\equiv-k\pmod p$.  The symbol is then $(-k/p)$.  This gives
\eqref{positive-series}.  Modulo $2$, every residue $k$ has exactly one
solution of $r^2\equiv k\pmod2$.  Thus the normalized factor at $p=2$
is one in all three cases.

For an odd positive integer $q$, we write
$\left(\frac{a}{q}\right)$ for the Kronecker symbol,
with $\left(\frac{a}{1}\right)=1$. The singular series admit the Dirichlet series expansions
\begin{align}
 \mathfrak S_-(k)
 &=\lim_{V\to\infty}
   \sum_{\substack{q\leq V\\2\nmid q}}
   \frac{\mu(q)}{\varphi(q)}
   \left(\frac{k}{q}\right),
 \label{negative-series-expansion}\\
 \mathfrak S_+(k)
 &=\lim_{V\to\infty}
   \sum_{\substack{q\leq V\\2\nmid q}}
   \frac{\mu(q)}{\varphi(q)}
   \left(\frac{-k}{q}\right).
 \label{positive-series-expansion}
\end{align}
Here the limits are taken through the natural partial sums $q\leq V$,
following \cite[Section~4]{BaierZhao2007}.  The corresponding series
may fail to converge absolutely, so the order of summation is essential.
Their coefficients are related by the fixed
quadratic twist
\begin{equation*}\label{fixed-twist}
 \left(\frac{k}{q}\right)
 =
 \left(\frac{-1}{q}\right)
 \left(\frac{-k}{q}\right)
 \qquad (2\nmid q).
\end{equation*}

If $k=t^2$ with $t\geq1$, then $(k/p)=1$ for every prime $p\nmid t$,
and hence
\[
 \mathfrak S_-(t^2)
 =
 \prod_{\substack{p>2\\p\nmid t}}
 \left(1-\frac{1}{p-1}\right)
 =0.
\]
This vanishing reflects the factorizations
\[
 h^2-t^2=(h-t)(h+t),
 \qquad
 t^2-h^2=(t-h)(t+h).
\]
Since $k$ is restricted to be squarefree, the only square value of $k$
is $1$.  We therefore exclude $k=1$ from the negative- and
reverse-shift estimates.

\subsection{A unified circle method setup}

In this subsection, $K_0$ is fixed.  The implied constants may depend
on the fixed parameters in the statement.  They do not depend on $x$,
$y$, or $k$.

We place the three circle method arguments in a common framework.
For $\nu\in\{+,-,0\}$, define
\begin{equation*}\label{signed-data}
 (s_+,t_+)=(-1,-1),\qquad
 (s_-,t_-)=(-1,1),\qquad
 (s_0,t_0)=(1,-1),
\end{equation*}
and set
\begin{equation}\label{three-cutoffs}
 z_+=x^2+y,\qquad z_-=x^2,\qquad z_0=y.
\end{equation}
We further define
\[
 \varkappa_+=-1,\qquad
 \varkappa_-=\varkappa_0=1,
\]
and, recalling \eqref{H0-definition},
\[
 H_+(x,k)=x,\qquad
 H_-(x,k)=(x-\sqrt{k})_+,\qquad
 H_0(x,k)=\min\{x,\sqrt{k}\}.
\]
In this notation, the equation
\[
 m+s_\nu h^2+t_\nu k=0
\]
specializes, for $\nu=+,-,0$, respectively, to
\[
 m=h^2+k,\qquad
 m=h^2-k,\qquad
 m=k-h^2.
\]
For $s\in\{-1,1\}$, set
\[
 S_{1,z}(\alpha)=\sum_{m\leq z}\Lambda(m)e(\alpha m),
 \qquad
 S_{2,s}(\alpha)=\sum_{h\leq x}e(s\alpha h^2).
\]
By orthogonality,
\begin{equation*}\label{unified-fourier-identity}
\begin{aligned}
 \mathcal P_\nu(k)
 &:=\sum_{h\leq x}\Lambda(-s_\nu h^2-t_\nu k)\\
 &=\int_0^1 S_{1,z_\nu}(\alpha)S_{2,s_\nu}(\alpha)
   e(t_\nu k\alpha)\,\dif\alpha.
\end{aligned}
\end{equation*}
In all three applications,
\begin{equation}\label{z-range}
 \frac{x^2}{(\log x)^A}\ll z_\nu\ll_{K_0}x^2,
 \qquad \log z_\nu\asymp_{K_0}\log x.
\end{equation}

Fix
\begin{equation}\label{circle-parameters}
 Q_1=(\log x)^c,\qquad Q=x^{1-\delta},
 \qquad 0<\delta<\frac1{10},
\end{equation}
where we choose $c$ at the end.  On the interval
$\mathcal I=[1/Q,1+1/Q]$, let
\[
 \majorarc=\bigcup_{q\leq Q_1}
 \bigcup_{\substack{1\leq a\leq q\\(a,q)=1}}
 \left[\frac aq-\frac1{qQ},\frac aq+\frac1{qQ}\right],
 \qquad
 \minorarc=\mathcal I\setminus\majorarc.
\]
The arcs are disjoint for sufficiently large $x$.

We first decompose $S_{1,z}$ according to coprimality with the major arc
denominator.  Let
\[
 \alpha=\frac{a}{q}+\beta\in\majorarc.
\]
Then
\begin{equation}\label{S1-coprime-splitting}
 S_{1,z}(\alpha)
 =
 \sum_{\substack{m\leq z\\(m,q)=1}}
 \Lambda(m)e\left(\frac{am}{q}\right)e(\beta m)
 +
 \sum_{\substack{m\leq z\\(m,q)>1}}
 \Lambda(m)e\left(\frac{am}{q}\right)e(\beta m).
\end{equation}
Since $\Lambda$ is supported on prime powers, the second sum is
\begin{equation}\label{S1-noncoprime-part}
 \sum_{p\mid q}
 \sum_{\substack{j\geq1\\p^j\leq z}}
 \log p\,
 e\left(\frac{ap^j}{q}\right)e(\beta p^j).
\end{equation}

We now treat the first sum in \eqref{S1-coprime-splitting}.  For a
Dirichlet character $\chi\bmod r$, define its Gauss sum by
\[
 \tau_r(\chi)
 :=
 \sum_{b\bmod r}\chi(b)e(b/r).
\]
Since $(a,q)=1$, character orthogonality gives, for $(m,q)=1$,
\begin{equation}\label{character-expansion}
e\left(\frac{am}{q}\right)
 =
 \frac{1}{\varphi(q)}
 \sum_{\chi\bmod q}
 \tau_q(\chi)\overline{\chi}(am)
 =
 \frac{1}{\varphi(q)}
 \sum_{\chi\bmod q}
 \tau_q(\chi)\overline{\chi}(a)\overline{\chi}(m).
\end{equation}
Since $\tau_q(\chi_0)=\mu(q)$, set
\[
 T_{1,z}(\alpha)
 :=
 \frac{\mu(q)}{\varphi(q)}
 \sum_{m\leq z}e(\beta m).
\]
For each character $\chi\bmod q$, define
\begin{equation}\label{def Achi}
 A_\chi(m):=
 \begin{cases}
  \Lambda(m)-1, & \chi=\chi_0,\\
  \overline{\chi}(m)\Lambda(m), & \chi\neq\chi_0,
 \end{cases}
\end{equation}
and set
\[
 E_{1,z}(\alpha)
 :=
 \frac{1}{\varphi(q)}
 \sum_{\chi\bmod q}
 \tau_q(\chi)\overline{\chi}(a)
 \sum_{m\leq z}A_\chi(m)e(\beta m).
\]
The character identity above and the definitions of $T_{1,z}$ and
$E_{1,z}$ give
\begin{align*}
 &\sum_{\substack{m\leq z\\(m,q)=1}}
 \Lambda(m)e\left(\frac{am}{q}\right)e(\beta m)\\
 &\qquad=
 T_{1,z}(\alpha)+E_{1,z}(\alpha)
 -
 \frac{\mu(q)}{\varphi(q)}
 \sum_{\substack{m\leq z\\(m,q)>1}}
 \Lambda(m)e(\beta m).
\end{align*}
Combining this identity with \eqref{S1-noncoprime-part}, we obtain
\begin{equation}\label{S1-decomposition}
 S_{1,z}(\alpha)
 =
 T_{1,z}(\alpha)+E_{1,z}(\alpha)+R_{q,z}(\alpha),
\end{equation}
where
\begin{align*}
 R_{q,z}(\alpha)
 &=
 \sum_{\substack{m\leq z\\(m,q)>1}}
 \Lambda(m)
 \left(
  e\left(\frac{am}{q}\right)
  -\frac{\mu(q)}{\varphi(q)}
 \right)e(\beta m)
 \notag\\
 &=
 \sum_{p\mid q}
 \sum_{\substack{j\geq1\\p^j\leq z}}
 \log p
 \left(
  e\left(\frac{ap^j}{q}\right)
  -\frac{\mu(q)}{\varphi(q)}
 \right)e(\beta p^j).
 \label{S1-remainder}
\end{align*}
Consequently, it follows from $q\leq Q_1$ and \eqref{z-range} that
\begin{equation*}\label{S1-remainder-bound}
 |R_{q,z}(\alpha)|
 \ll \omega(q)\log(2z)
 \ll \bigl(\log(2z)\bigr)^2.
\end{equation*}

For the quadratic sum $S_{2,s}(\alpha)$, let $d\mid q$ and write
\[
 q^*=\frac qd,\qquad
 d_{q^*}=\frac d{(d,q^*)},\qquad q^*_{d}=\frac{q^*}{(q^*,d)}.
\]
If $(h,q)=d$, then $h=dn$, $(n,q^*)=1$, $\left(sa d_{q^*},q^*_{d}\right)=1$, and
\[
 e\left(\frac{sah^2}{q}\right)
 =
 e\left(\frac{sa d_{q^*}n^2}{q^*_{d}}\right).
\]
Similarly to \eqref{character-expansion}, we obtain
\begin{equation}\label{quadratic-character-expansion}
 e\left(\frac{sa d_{q^*}n^2}{q^*_{d}}\right)
 =
 \frac{1}{\varphi(q^*_{d})}
 \sum_{\chi\bmod q^*_{d}}
 \tau_{q^*_{d}}(\chi)
 \overline{\chi}(sa d_{q^*})
 \overline{\chi}^{\,2}(n).
\end{equation}
Splitting $S_{2,s}(\alpha)$ according to $d=(h,q)$ gives
\[
 S_{2,s}(\alpha)
 =
 \sum_{d\mid q}
 \sum_{\substack{h\leq x\\(h,q)=d}}
 e\left(\frac{sah^2}{q}\right)e(s\beta h^2).
\]
We insert \eqref{quadratic-character-expansion}, with $n=h/d$, and
separate the characters according as $\chi^2=\chi_0$ or
$\chi^2\neq\chi_0$.  This gives
\begin{equation}\label{S2-decomposition}
 S_{2,s}(\alpha)
 =
 T_{2,s}(\alpha)+E_{2,s}(\alpha),
\end{equation}
where
\begin{align}
 T_{2,s}(\alpha)
 &=
 \sum_{d\mid q}\frac{1}{\varphi(q^*_{d})}
 \sum_{\substack{\chi\bmod q^*_{d}\\\chi^2=\chi_0}}
 \tau_{q^*_{d}}(\chi)
 \overline{\chi}(sa d_{q^*})
 \sum_{\substack{h\leq x\\(h,q)=d}}
 e(s\beta h^2),
 \label{T2-definition}\\
 E_{2,s}(\alpha)
 &=
 \sum_{d\mid q}\frac{1}{\varphi(q^*_{d})}
 \sum_{\substack{\chi\bmod q^*_{d}\\\chi^2\neq\chi_0}}
 \tau_{q^*_{d}}(\chi)
 \overline{\chi}(sa d_{q^*})
 \sum_{\substack{h\leq x\\(h,q)=d}}
 \overline{\chi}^{\,2}(h/d)e(s\beta h^2).
 \label{E2-definition}
\end{align}
The coefficient appearing in $T_{2,s}$ has a convenient
complete-sum interpretation.  Define
\[
 G_s(a,q,d)
 :=
 \sum_{\substack{\ell\bmod q^*_{d}\\(\ell,q^*_{d})=1}}
 e\left(\frac{sa d_{q^*}\ell^2}{q^*_{d}}\right).
\]
Applying \eqref{character-expansion} once more and summing
over the reduced residue classes $\ell\bmod q^*_{d}$, we find
\begin{align*}
 G_s(a,q,d)
 &=
 \frac{1}{\varphi(q^*_{d})}
 \sum_{\chi\bmod q^*_{d}}
 \tau_{q^*_{d}}(\chi)
 \overline{\chi}(sa d_{q^*})
 \sum_{\substack{\ell\bmod q^*_{d}\\(\ell,q^*_{d})=1}}
 \overline{\chi}^{\,2}(\ell)\\
 &=
 \sum_{\substack{\chi\bmod q^*_{d}\\\chi^2=\chi_0}}
 \tau_{q^*_{d}}(\chi)
 \overline{\chi}(sa d_{q^*}),
\end{align*}
since
\[
 \sum_{\substack{\ell\bmod q^*_{d}\\(\ell,q^*_{d})=1}}
 \overline{\chi}^{\,2}(\ell)
 =
 \begin{cases}
  \varphi(q^*_{d}),&\chi^2=\chi_0,\\
  0,&\chi^2\neq\chi_0.
 \end{cases}
\]
Consequently,
\begin{equation*}\label{T2-complete-sum}
 T_{2,s}(\alpha)
 =
 \sum_{d\mid q}
 \frac{G_s(a,q,d)}{\varphi(q^*_{d})}
 \sum_{\substack{h\leq x\\(h,q)=d}}
 e(s\beta h^2).
\end{equation*}
We have $|G_s(a,q,d)|\leq\varphi(q^*_{d})$.  The conditions $(h,q)=d$
split the sum over $h$.  Therefore
\begin{equation*}\label{T-bounds}
 |T_{1,z}(\alpha)|\leq z,
 \qquad |T_{2,s}(\alpha)|\leq x.
\end{equation*}
Also, the decompositions for $s=+1$ are the complex conjugates of the
decompositions for $s=-1$.

\subsection{Singular series and principal major arcs}

\begin{lemma}[Singular series tails]\label{lem:tail-bound}
Let $\varkappa\in\{-1,+1\}$ and
\[
 \mathcal T_\varkappa(k,Q_1)=
 \lim_{V\to\infty}
 \sum_{\substack{Q_1<q\leq V\\2\nmid q}}
 \frac{\mu(q)}{\varphi(q)}
 \left(\frac{\varkappa k}{q}\right).
\]
Recall that $Q_1=(\log x)^c$ in \eqref{circle-parameters}.
For every $C>0$, we can take $c=c(C,A,K_0)$ sufficiently large.  Then
\begin{equation}\label{tail-second-moment}
 \sum_{\substack{k\leq y\\\mu^2(k)=1\\k>1}}
 |\mathcal T_\varkappa(k,Q_1)|^2
 \ll_{C,A,K_0}\frac{y}{(\log x)^C}.
\end{equation}
\end{lemma}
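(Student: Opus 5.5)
The plan is to treat the tail as a weighted sum of quadratic characters in $k$ and use large-sieve-type bounds for real characters over squarefree moduli, following \cite[Section~4]{BaierZhao2007}. For odd squarefree $q$, the map $k\mapsto\left(\frac{\varkappa k}{q}\right)$ is a character modulo $q$. It is nonprincipal (indeed primitive up to a factor $\left(\frac{\varkappa}{q}\right)$) as soon as $q>1$. Since $Q_1<q$, every term in $\mathcal T_\varkappa$ comes from a nonprincipal character. The cross terms therefore carry no main term, and the bound \eqref{tail-second-moment} should come from orthogonality.

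\textbf{Step 1: a finite tail.} I would split the tail at a height $V_0=x^{\eta}$ with a small fixed $\eta>0$, writing $\mathcal T_\varkappa(k,Q_1)=\Sigma_1(k)+\Sigma_2(k)$ with $Q_1<q\leq V_0$ in $\Sigma_1$ and $q>V_0$ in $\Sigma_2$. For $\Sigma_2$, which is the convergent tail of the $q$-expansion \eqref{negative-series-expansion} beyond $V_0$, I would write it as $\sum_{q>V_0}\mu(q)\chi_{D}(q)/\varphi(q)$ with $\chi_D$ the primitive character attached to $\varkappa k$ (conductor $\leq4k\leq 4K_0x^2$). Partial summation, with $\mu(q)q/\varphi(q)$ expanded as a divisor convolution, then gives a Pólya--Vinogradov or Burgess-type bound for $\sum_{q\leq V}\mu(q)\chi(q)$. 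I do not expect this to be pointwise small for every $k$, because of possible Siegel zeros. So instead of a pointwise bound I would use a mean-square bound over $k$ for this piece too. Heath-Brown's large sieve for real characters gives
\[
\sum_{k\leq y}\mu^2(k)\Bigl|\sum_{V<q\leq 2V}b_q\Bigl(\frac{\varkappa k}{q}\Bigr)\Bigr|^2\ll (yV)^{\varepsilon}(y+V)\sum_q|b_q|^2 .
\]
With $b_q=\mu(q)/\varphi(q)$ on a dyadic block, this gives $\ll y^{1+\varepsilon}V^{-1+\varepsilon}$ for $V\leq y$. For $V>y$, I would first apply quadratic reciprocity to swap the roles of $q$ and $k$, so the long character sum is over the shorter variable. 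This controls all dyadic blocks of $q\geq V_0$ with a power saving. I would take the limit in $V$ by Fatou/Cauchy--Schwarz after summing the dyadic blocks, weighting with $\sum_j 2^{-j\varepsilon}$ so that Cauchy--Schwarz over blocks costs only a constant.

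\textbf{Step 2: the middle range $Q_1<q\leq V_0$.} Here I would expand the square directly:
\[
\sum_{k\leq y}\mu^2(k)|\Sigma_1(k)|^2=\sum_{q_1,q_2}\frac{\mu(q_1)\mu(q_2)}{\varphi(q_1)\varphi(q_2)}\sum_{k\leq y}\mu^2(k)\Bigl(\frac{\varkappa k}{q_1q_2}\Bigr).
\]
Write $q_1q_2=r^2 m$ with $r=(q_1,q_2)$. The inner character has modulus with squarefree kernel $m\cdot\mathrm{rad}(r)$ and is nonprincipal unless $q_1=q_2$. For $q_1=q_2$ the diagonal is $\ll y\sum_{q>Q_1}\varphi(q)^{-2}\ll y/Q_1^{1-\varepsilon}$. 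For $q_1\neq q_2$ I would remove $\mu^2(k)$ by $\sum_{d^2\mid k}\mu(d)$ and then use Pólya--Vinogradov, giving $\ll y^{1/2+\varepsilon}V_0^{1+\varepsilon}$ in total. Alternatively, I would apply the same large sieve as in Step 1 to this range as well, which gives $\ll y^{\varepsilon}(y+V_0)\sum_{q>Q_1}\varphi(q)^{-2}\ll y^{1+\varepsilon}/Q_1$. Both routes need care with the $y^\varepsilon$ loss, so I would prefer Pólya--Vinogradov, which is clean since $V_0=x^\eta$ and $y\geq x^2(\log x)^{-A}$. Combining, the total is $\ll y(\log x)^{-c+1}+y^{1-\eta'}$. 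Choosing $c>C+1$ gives \eqref{tail-second-moment}, with dependence on $A,K_0$ only through $\log y\asymp\log x$.

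\textbf{Main obstacle.} The delicate part is Step 1: the tail $q>V_0$ of a conditionally convergent series has to be controlled in mean square without any Siegel-zero losses. The $\varepsilon$-losses in Heath-Brown's large sieve are harmless there only because $V_0$ is a power of $x$. I would first try to follow Baier--Zhao's treatment verbatim, since their $y\leq x^2$ argument uses only $\log y\asymp\log x$ and extends to $y\leq K_0x^2$ unchanged. The sign $\varkappa$ contributes only the fixed twist $\left(\frac{-1}{q}\right)$, which can be absorbed into the coefficients $b_q$.
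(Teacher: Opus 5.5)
\textbf{There is a genuine gap at the infinite end of the tail.} Your Step~2 is fine. So is the part of Step~1 with $x^{\eta}<q\le y^{O(1)}$. Together these match the paper's first two ranges in spirit: the paper expands the square for $Q_1<q\le\sqrt y$ and uses large sieves beyond that. The problem starts once $V>y$.

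\begin{itemize}
\item Take a dyadic block $V<q\le 2V$ with $b_q=\mu(q)/\varphi(q)$. Heath-Brown's inequality gives $(yV)^{\varepsilon}(y+V)V^{-1}\asymp(yV)^{\varepsilon}$, which grows with $V$.
\item Swapping variables by reciprocity and using the classical large sieve over the characters $\chi_D$ with $|D|\le 4y$ gives at best $(V+y^2)V^{-1}\ll 1$.
\item No inequality that only sees $\sum|b_q|^2$ can do better. Take $b_q=\chi_k(q)$ for a single prime $k$. This shows the block operator has squared norm $\gg V$, while $\sum_{V<q\le 2V}|b_q|^2\asymp V^{-1}$.
\end{itemize}

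So the block bounds do not decay and the sum over infinitely many blocks diverges. The weights $2^{-j\varepsilon}$ cannot help, and Fatou has no bound uniform in the upper limit to work with. This is not a technicality. The existence of the limit defining $\mathcal T_\varkappa(k,Q_1)$ is a prime-number-theorem-type statement about $\sum_{q\le T}\mu(q)\chi(q)$, and it needs arithmetic input on $\mu\chi$. Your first attempt at $\Sigma_2$ also fails: P\'olya--Vinogradov and Burgess bound $\sum\chi(n)$, not $\sum\mu(n)\chi(n)$.

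\textbf{The missing idea.} The Siegel-zero obstruction you cite disappears if the tail is cut high enough.

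\begin{itemize}
\item The paper applies the large sieves only on the finite range $\sqrt y<q\le W$, with $W=\exp(y^{\eta/2})$. There are $v\asymp y^{\eta/2}$ dyadic blocks, so even $O(1)$ per block costs only about $v^2\ll y^{\eta}$ after Cauchy--Schwarz.
\item For $q>W$ the paper proves a pointwise bound. The conductor of $\chi_{D_\varkappa(k)}$ is at most $4k\le 4y\le(\log W)^{3/\eta}$.
\item Siegel's theorem with exponent $\eta/6$ then gives $1-\beta\gg(\log T)^{-1/2}$ for any exceptional zero when $T\ge W$.
\item Perron's formula with the zero-free region gives $\sum_{n\le T}\mu(n)\widetilde\chi(n)n/\varphi(n)\ll T\exp(-c\sqrt{\log T})$.
\item Partial summation yields $\sum_{q>W}\mu(q)\widetilde\chi(q)/\varphi(q)\ll\exp(-c\,y^{\eta/4})$ uniformly in $k$. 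This also proves convergence in the natural order.
\end{itemize}

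Starting the tail at $V_0=x^{\eta}$, as you do, is exactly where no pointwise bound is available. You need the three-range split with an exponentially large cut.
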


\begin{proof}
Choose $0<\eta\leq1/5$.  Put
\[
 U=\sqrt y,\qquad W=\exp(y^{\eta/2}),
\]
and split the sum over $q$ into three ranges:
\[
 Q_1<q\leq U,\qquad U<q\leq W,\qquad q>W.
\]
For the first range, after dropping the squarefree restriction and expanding the square, we estimate the diagonal and off-diagonal contributions as in \cite[(5.2)]{BaierZhao2007}.  They give
\begin{equation*}\label{tail-short-range}
 \sum_{\substack{k\leq y\\\mu^2(k)=1}}
 \left|\sum_{\substack{Q_1<q\leq U\\2\nmid q}}
 \frac{\mu(q)}{\varphi(q)}
 \left(\frac{\varkappa k}{q}\right)\right|^2
 \ll \frac y{Q_1}+U(\log(2U))^5.
\end{equation*}
The diagonal is bounded by
$y\sum_{q>Q_1}\mu^2(q)/\varphi(q)^2$.  The character sum bound of
P\'olya and Vinogradov controls the off-diagonal terms.  These estimates apply to both choices of $\varkappa$, since changing
the sign of $\varkappa$ merely multiplies the coefficient indexed
by each odd $q$ by $(\frac{-1}{q})$, leaving the diagonal terms
and the absolute bounds for the off-diagonal terms unchanged.

For the middle range, let
\begin{equation*}
 v=\left\lceil\log_2(W/U)\right\rceil.
\end{equation*}
We split the sum over $q$ into the intervals
$2^{r-1}U<q\leq2^rU$.  We use the real character large sieve in
\cite{HeathBrown1995}.  We also use the classical large sieve
in \cite[(5.3) to (5.6)]{BaierZhao2007}.  These two large sieves allow
arbitrary coefficients.  Thus the fixed factor $(-1/q)$ does not change the bounds.
We obtain
\begin{equation}\label{tail-middle-range}
 \sum_{\substack{k\leq y\\\mu^2(k)=1}}
 \left|\sum_{\substack{U<q\leq W\\2\nmid q}}
 \frac{\mu(q)}{\varphi(q)}
 \left(\frac{\varkappa k}{q}\right)\right|^2
\ll_\eta
 v\left(y^{3\eta}+U^{\eta-1}y^{1+\eta}
       +v+y^{1-\eta}\right)
 \ll_\eta y^{1-\eta/2}.
\end{equation}
We have $U=\sqrt y$ and $v\ll_\eta y^{\eta/2}$.  After we include the
outer factor $v$, the four terms in \eqref{tail-middle-range} are at
most
\[
 y^{7\eta/2},\qquad y^{1/2+2\eta},\qquad
 y^\eta,\qquad y^{1-\eta/2},
\]
in the same order.  Each term is $O(y^{1-\eta/2})$ when
$\eta\leq1/5$.

We next use a uniform estimate for the long tail.  Let $D>0$ be fixed.
Suppose that $\chi$ is a nonprincipal primitive real character with
conductor $r\leq(\log W)^D$.  Let $\widetilde\chi$ be induced modulo
$M$, where $r\mid M$ and $M/r\leq2$.  Then
\begin{equation}\label{twisted-mobius-tail}
 \lim_{V\to\infty}\sum_{W<q\leq V}
 \frac{\mu(q)\widetilde\chi(q)}{\varphi(q)}
 \ll_D \exp\bigl(-c_D\sqrt{\log W}\bigr).
\end{equation}
We give the details.  This is the usual estimate due to Siegel and Walfisz
for the twisted M\"obius function.  The extra weight $q/\varphi(q)$ is
harmless.  Put
\[
 B_{\widetilde\chi}(T)=
 \sum_{n\leq T}\mu(n)\widetilde\chi(n)\frac n{\varphi(n)}.
\]
For $\Re s>1$, the Dirichlet series has the factorization
\begin{align*}
 \sum_{n=1}^{\infty}
 \frac{\mu(n)\widetilde\chi(n)n/\varphi(n)}{n^s}
 &=\frac{G_{\chi,M}(s)}{L(s,\chi)},          \label{mobius-factorization}
\end{align*}
where
\[
 G_{\chi,M}(s)=
 \prod_{p\nmid M}
 \frac{1-\chi(p)\dfrac p{p-1}p^{-s}}
      {1-\chi(p)p^{-s}}
 \prod_{\substack{p\mid M\\p\nmid r}}
 (1-\chi(p)p^{-s})^{-1}.
\]
The first product converges absolutely and uniformly on
$\Re s\geq1/2+\varepsilon$, while the second product has at most one
factor.  The standard zero-free region shows that $L(s,\chi)$ has no zeros in
\[
 \sigma\geq1-\frac{c}{\log(r(|t|+2))},
\]
apart from a possible real exceptional zero. If $\beta$ is such a
zero, Siegel's theorem with exponent $1/(2D)$ and the bound
$r\leq(\log W)^D\leq(\log T)^D$ give
\[
 1-\beta\gg_D(\log T)^{-1/2}.
\]
Thus $L(s,\chi)$ has no zeros in the rectangle needed for moving the
Perron contour to
\[
 \sigma=1-c_D(\log T)^{-1/2},
 \qquad |t|\leq\exp(\sqrt{\log T}).
\]
The constant here is ineffective.  The standard bounds for
$L(s,\chi)^{-1}$ on this contour give the following uniform estimate
for $T\geq W$:
\begin{equation*}\label{weighted-mobius-partial-sum}
 B_{\widetilde\chi}(T)
 \ll_D T\exp\bigl(-c_D\sqrt{\log T}\bigr).
\end{equation*}
The constant is again ineffective.  Partial summation gives
\begin{align*}
 \sum_{W<n\leq V}
 \frac{\mu(n)\widetilde\chi(n)}{\varphi(n)}
 &=\frac{B_{\widetilde\chi}(V)}V
   -\frac{B_{\widetilde\chi}(W)}W
   +\int_W^V\frac{B_{\widetilde\chi}(t)}{t^2}\,\dif t\\
 &\ll_D\exp\bigl(-c_D'\sqrt{\log W}\bigr).
\end{align*}
This proves \eqref{twisted-mobius-tail}.  It also proves convergence in
the natural order.  This is the argument behind
\cite[(5.7)]{BaierZhao2007}.

Let $k>1$ be squarefree.  Let $D_\varkappa(k)$ be the fundamental
discriminant of $\mathbb Q(\sqrt{\varkappa k})$.
For odd $q$,
\begin{equation*}
 \left(\frac{\varkappa k}{q}\right)
 =\chi_{D_\varkappa(k)}(q),
 \qquad
 |D_\varkappa(k)|\leq4k.
\end{equation*}
This primitive character is nonprincipal.  Let $\chi_0^{(2)}$ be the
principal character modulo $2$.  We restrict $q$ to odd numbers by
replacing $\chi_{D_\varkappa(k)}$ with
$\chi_{D_\varkappa(k)}\chi_0^{(2)}$.  The new character is
nonprincipal modulo
\begin{equation*}
 \operatorname{lcm}(|D_\varkappa(k)|,2)\leq8k
\end{equation*}
and it is induced by $\chi_{D_\varkappa(k)}$.  Therefore its conductor does
not change, and its working modulus increases by at most a factor of
two.
Since
\begin{equation*}
 k\leq y=(\log W)^{2/\eta},
\end{equation*}
we have the following bound for sufficiently large $W$.
\[
 |D_\varkappa(k)|\leq4(\log W)^{2/\eta}
 \leq(\log W)^{3/\eta}.
\]
We now apply \eqref{twisted-mobius-tail} with $D=3/\eta$.  The estimate
is uniform and gives
\begin{equation*}\label{tail-long-range}
 \sum_{\substack{q>W\\2\nmid q}}
 \frac{\mu(q)}{\varphi(q)}
 \left(\frac{\varkappa k}{q}\right)
 \ll_\eta \exp\left(-c_\eta\sqrt{\log W}\right).
\end{equation*}
It follows that
\begin{align*}
 \sum_{\substack{k\leq y\\\mu^2(k)=1\\k>1}}
 |\mathcal T_\varkappa(k,Q_1)|^2
 &\ll_\eta
 \frac y{Q_1}
 +\sqrt y\,(\log(2y))^5
 +y^{1-\eta/2}
 +y\exp\left(-2c_\eta\sqrt{\log W}\right).
\end{align*}
We have $Q_1=(\log x)^c$ and $y\geq x^2(\log x)^{-A}$.  We also have
$\log y\asymp_{A,K_0}\log x$.  Hence the last three terms have a power
saving.  We take $c=c(C,A,K_0)$ sufficiently large.  This proves
\eqref{tail-second-moment}.
\end{proof}

We next evaluate the contribution of the principal terms on the
major arcs, using Lemma~\ref{lem:tail-bound} to control the error
in completing the singular series.

\begin{lemma}[Principal major arcs]\label{lem:principal-arcs}
For every $C>0$, provided that $c$ in $Q_1=(\log x)^c$
is sufficiently large in terms of $C$, $A$, and $K_0$, one has
\begin{equation}\label{principal-L2}
 \sum_{\substack{k\leq y\\\mu^2(k)=1\\k>1}}
 \left|\int_{\majorarc}T_{1,z_\nu}(\alpha)T_{2,s_\nu}(\alpha)
 e(t_\nu k\alpha)\,\dif\alpha
 -\mathfrak S_\nu(k)H_\nu(x,k)\right|^2
 \ll_{C,A,K_0}
 \frac{yx^2}{(\log x)^C}+yxQ(\log x)^{C_1},
\end{equation}
where
\[
 \mathfrak S_\nu(k)=
 \begin{cases}
  \mathfrak S_+(k),&\nu=+,\\
  \mathfrak S_-(k),&\nu=-,0,
 \end{cases}
\]
and where $C_1$ depends only on the fixed circle method parameters.
\end{lemma}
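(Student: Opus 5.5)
We compute the major arc integral of $T_{1,z_\nu}T_{2,s_\nu}e(t_\nu k\alpha)$ arc by arc, factor it into a singular-series part and an archimedean part, and compare each with its complete version. Write $\alpha=a/q+\beta$ with $q\leq Q_1$, $(a,q)=1$ and $|\beta|\leq 1/(qQ)$.

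The integrand on this arc is
\[
\frac{\mu(q)}{\varphi(q)}\sum_{d\mid q}\frac{G_{s}(a,q,d)}{\varphi(q^*_d)}\,e\Bigl(\frac{t_\nu ka}{q}\Bigr)\sum_{m\leq z}e(\beta m)\sum_{\substack{h\leq x\\(h,q)=d}}e(s\beta h^2+t_\nu k\beta).
\]
We first replace the restricted sum over $h$ with $(h,q)=d$ by the density factor $\rho(d,q)$ times the full sum $\sum_{h\leq x}e(s\beta h^2)$. Then we sum over $a$. The arithmetic part becomes a Ramanujan-type sum
\[
A_\nu(q,k)=\frac{\mu(q)}{\varphi(q)}\sum_{a}\sum_{d}\cdots e\Bigl(\frac{t_\nu ka}{q}\Bigr).
\]
For squarefree $q$ this factors multiplicatively. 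Evaluating the quadratic Gauss sums modulo an odd prime $p$ gives $A_\nu(p,k)=\mu(p)\varphi(p)^{-1}\bigl(\tfrac{\varkappa_\nu k}{p}\bigr)$. Modulo $2$ the quantity is $0$, since $\mu(q)$ forces $q$ squarefree and the $p=2$ factor cancels, consistent with the local factor $1$ at $2$. Hence
\[
\sum_{q\leq Q_1}A_\nu(q,k)=\sum_{\substack{q\leq Q_1\\2\nmid q}}\frac{\mu(q)}{\varphi(q)}\Bigl(\frac{\varkappa_\nu k}{q}\Bigr),
\]
which by \eqref{negative-series-expansion} and \eqref{positive-series-expansion} equals $\mathfrak S_\nu(k)-\mathcal T_{\varkappa_\nu}(k,Q_1)$.

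\textbf{Archimedean part.} The $\beta$-integral $\int_{|\beta|\leq1/(qQ)}\sum_{m\leq z}e(\beta m)\sum_{h\leq x}e(s\beta h^2)e(t_\nu k\beta)\,\dif\beta$ is extended to $\beta\in[-1/2,1/2]$. By orthogonality the full integral counts pairs $(m,h)$ with $m\leq z_\nu$, $h\leq x$ and $m+s_\nu h^2+t_\nu k=0$, i.e.\ $m\geq1$.
\begin{itemize}
\item For $\nu=+$ this count is $x$, up to $O(1)$.
\item For $\nu=-$ the condition is $h^2>k$, giving $(x-\sqrt k)_+$.
\item For $\nu=0$ it is $h<\sqrt k$, $h\leq x$, giving $H_0(x,k)$.
\end{itemize}
These are the $H_\nu(x,k)$, each up to $O(1)$. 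The cutoffs $z_\nu$ in \eqref{three-cutoffs} are chosen exactly so that $m\leq z_\nu$ is automatic.

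The tail $|\beta|>1/(qQ)$ is bounded with $|\sum_{m\leq z}e(\beta m)|\ll|\beta|^{-1}$. We do not use a pointwise bound for the quadratic sum. Instead, after squaring and summing over $k$, we apply Parseval in $k$, since $e(t_\nu k\beta)$ makes the $k$-sum a Fourier series. Combined with $|A_\nu(q,k)|$-type bounds $\ll q^{-1/2+\varepsilon}$ summed over $q\leq Q_1$ and $\int|S_2|^2\ll x$-type mean values, this yields a mean-square error of shape $yxQ(\log x)^{C_1}$. The same term absorbs the $O(\log)$ discrepancies from replacing the restricted $h$-sums by the full sums.

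\textbf{Assembly and main obstacle.} Collecting terms, the principal integral equals $(\mathfrak S_\nu(k)-\mathcal T_{\varkappa_\nu}(k,Q_1))H_\nu(x,k)$ plus the truncation error. Since $H_\nu\leq x$, the mean square of the tail contribution is at most $x^2\sum_k|\mathcal T|^2$, which Lemma~\ref{lem:tail-bound} bounds by $yx^2(\log x)^{-C}$. The step I expect to be delicate is the evaluation of $A_\nu(q,k)$ for composite $q$. The $d$-decomposition with $q^*_d$ and $d_{q^*}$ has to be reassembled into the multiplicative form, and the sign $s_\nu$, through $\bigl(\tfrac{-1}{q}\bigr)$ from the Gauss sums, combines with $t_\nu$ to produce exactly $\varkappa_\nu$. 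I would first verify this for prime $q$, where the Gauss sum is explicit, and then prove multiplicativity in $q$ through the Chinese remainder theorem.
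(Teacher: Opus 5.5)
Your argument is correct and follows the paper's skeleton. You extend each arc integral to a full period, let orthogonality turn it into a count of $h\in\mathcal I_\nu(k)$, identify the arithmetic factor as $\frac{\mu(q)}{\varphi(q)}\bigl(\frac{\varkappa_\nu k}{q}\bigr)$ for odd $q$ and $0$ for even $q$, and finish with Lemma~\ref{lem:tail-bound} and $H_\nu(x,k)\le x$. The genuine difference is the extension error. The paper bounds it pointwise in $k$, by Cauchy--Schwarz in $\beta$ against $\|\beta\|^{-1}$ and Parseval for the restricted quadratic sum, getting $O\bigl((xQ)^{1/2}(\log x)^{C_1}\bigr)$ for each $k$. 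Squaring and summing over $k\le y$ is exactly what produces the term $yxQ(\log x)^{C_1}$. Your Bessel inequality in $k$ needs a prior Cauchy--Schwarz over $q$, using $|A_\nu(q,k)|\le1/\varphi(q)$ uniformly in $k$. It then gives a mean square bound $\ll Q_1^{O(1)}Q^2x$, which has no factor $y$ and is smaller since $Q\ll y$; you give up only pointwise information that the lemma does not need.

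Two points of execution. First, the ``$O(\log)$'' discrepancy from replacing the restricted $h$-sum by $\rho(d,q)$ times the full sum holds for the counts after orthogonality, $\#\{h\in\mathcal I_\nu(k):(h,q)=d\}=\frac{\varphi(q/d)}{q}H_\nu(x,k)+O(\tau(q/d))$. It does not hold pointwise on the arc, where partial summation only gives $O\bigl(\tau(q/d)(1+|\beta|x^2)\bigr)$. So either make the replacement after extending, as the paper does, or apply your Bessel bound to the difference as well; its mean square over the period is $\ll x$. Second, the composite-$q$ step you flag is immediate. For squarefree $q$ one has $q^*_d=q/d$, so $\rho(d,q)/\varphi(q^*_d)=1/q$ for every $d\mid q$, and $\sum_{d\mid q}G_s(a,q,d)=\sum_{r\bmod q}e(asr^2/q)$ is a complete sum. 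Summing over $a$ first yields Ramanujan sums, so by counting roots, $\sum_{a\bmod p}^{*}\sum_{r\bmod p}e\bigl(a(s_\nu r^2+t_\nu k)/p\bigr)=p\bigl(\frac{-s_\nu t_\nu k}{p}\bigr)=p\bigl(\frac{\varkappa_\nu k}{p}\bigr)$, with no Gauss-sum signs to track.
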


\begin{proof}
Fix $q$, $a$, and $d$.  The $\beta$ integral in the principal product
is
\[
 \int_{|\beta|\leq1/(qQ)}
 \left(\sum_{m\leq z_\nu}e(\beta m)\right)
 \left(\sum_{\substack{h\leq x\\(h,q)=d}}e(s_\nu\beta h^2)\right)
 e(t_\nu k\beta)\,\dif\beta.
\]
We extend this integral to a full period.  Orthogonality then gives
$m+s_\nu h^2+t_\nu k=0$.  The upper condition $m\leq z_\nu$ follows
from \eqref{three-cutoffs}.  The condition $m\geq1$ gives the intervals
\[
 \mathcal I_+(k)=[1,x],\qquad
 \mathcal I_-(k)=(\sqrt k,x],\qquad
 \mathcal I_0(k)=[1,x]\cap[1,\sqrt{k}).
\]
M\"obius inversion gives the following estimate uniformly for
$d\mid q$.
\begin{equation}\label{unified-coprime-count}
 \#\{h\in\mathcal I_\nu(k):(h,q)=d\}
 =\frac{\varphi(q/d)}qH_\nu(x,k)+O(\tau(q/d)).
\end{equation}

The moving intervals appear only after orthogonality on the full
period.  For the extension error, the quadratic sum is still the full
sum over $h\leq x$.  The factor $e(t_\nu k\beta)$ has absolute value
one.  A change of the sign $s_\nu$ only conjugates the quadratic sum.
Thus the estimate is uniform in $\nu$ and $k$.  Parseval's identity gives
\begin{equation}\label{quadratic-parseval}
 \int_0^1\left|
 \sum_{\substack{h\leq x\\(h,q)=d}}e(s\beta h^2)
 \right|^2\,\dif\beta
 =\#\{h\leq x:(h,q)=d\}\leq\frac xd+1.
\end{equation}
Outside the short arc, we have
$|\sum_{m\leq z}e(\beta m)|\ll\|\beta\|^{-1}$.  Cauchy's inequality
and \eqref{quadratic-parseval} give
\begin{equation}\label{beta-extension}
 \int_{1/(qQ)\leq|\beta|\leq1/2}
 \frac1{\|\beta\|}
 \left|\sum_{\substack{h\leq x\\(h,q)=d}}e(s_\nu\beta h^2)\right|
 \,\dif\beta
\ll(qQ)^{1/2}\left(\frac xd+1\right)^{1/2}.
\end{equation}
We have
\[
 |G_s(a,q,d)|\leq\varphi(q^*_{d}).
\]
Thus the total contribution of the error in
\eqref{unified-coprime-count} is
\begin{align*}
 \mathcal E_{\mathrm{count}}
 &\leq \sum_{q\leq Q_1}\frac{|\mu(q)|}{\varphi(q)}
       \sum_{a\bmod q}^{*}\sum_{d\mid q}
       \frac{|G_{s_\nu}(a,q,d)|}{\varphi(q^*_{d})}
       \tau(q/d)\notag\\
 &\ll \sum_{q\leq Q_1}\sum_{d\mid q}\tau(q/d)
 \ll Q_1(\log(2Q_1))^2
 \ll(\log x)^{C_1}.                            \label{counting-error-sum}
\end{align*}
The sum over $a$ cancels the factor $1/\varphi(q)$.  Moreover,
\[
 \frac{|G_{s_\nu}(a,q,d)|}{\varphi(q^*_{d})}\leq1.
\]
In the same way,
\eqref{beta-extension} gives the following bound after summation over
$a$, $d$, and $q$.
\begin{align}
 \mathcal E_{\mathrm{ext}}
 &\ll Q^{1/2}\sum_{q\leq Q_1}q^{1/2}
       \sum_{d\mid q}\left(\frac xd+1\right)^{1/2}\notag\\
 &\ll (xQ)^{1/2}Q_1^{3/2}(\log(2Q_1))^2
 \ll (xQ)^{1/2}(\log x)^{C_1}.                 \label{extension-error-sum}
\end{align}
Here we used $Q_1=(\log x)^c$.  We enlarged $C_1$ when needed.

We now find the main coefficient.
The factor $\mu(q)$ restricts $q$ to be squarefree.  Hence
$(d,q^*)=1$ for every $d\mid q$, and therefore
\[
 d_{q^*}=d,\qquad q^*_{d}=q^*=\frac qd.
\]
The classes with $(r,q)=d$
give
\[
 \sum_{d\mid q}G_s(a,q,d)
 =\sum_{r\bmod q}e\left(\frac{asr^2}{q}\right).
\]
Thus the main contribution for a fixed modulus $q$ is
\begin{equation*}\label{fixed-q-principal-contribution}
 \frac{\mu(q)H_\nu(x,k)}{q\varphi(q)}
 \sum_{a\bmod q}^{*}\sum_{r\bmod q}
 e\left(\frac{a(s_\nu r^2+t_\nu k)}q\right).
\end{equation*}
Define the rational complete sum as follows.
\[
 \Sigma_\nu(q,k)=
 \sum_{a\bmod q}^{*}\sum_{r\bmod q}
 e\left(\frac{a(s_\nu r^2+t_\nu k)}q\right).
\]
This sum is multiplicative.  At an odd prime $p$, we have
\[
 \Sigma_+(p,k)=p\left(\frac{-k}{p}\right),\qquad
 \Sigma_-(p,k)=\Sigma_0(p,k)=p\left(\frac{k}{p}\right),
\]
and all three sums vanish at $p=2$.  Hence, for odd squarefree $q$, we
have
\[
 \Sigma_\nu(q,k)=q\left(\frac{\varkappa_\nu k}{q}\right).
\]
We have proved the following formula for each $k$.
\begin{align}
 &\int_{\majorarc}T_{1,z_\nu}(\alpha)T_{2,s_\nu}(\alpha)
 e(t_\nu k\alpha)\,\dif\alpha\notag\\
 &\quad=H_\nu(x,k)
 \sum_{\substack{q\leq Q_1\\2\nmid q}}
 \frac{\mu(q)}{\varphi(q)}
 \left(\frac{\varkappa_\nu k}{q}\right)
 +O(\mathcal E_{\mathrm{count}}+\mathcal E_{\mathrm{ext}})
 \notag\\
 &\quad=H_\nu(x,k)
 \sum_{\substack{q\leq Q_1\\2\nmid q}}
 \frac{\mu(q)}{\varphi(q)}
 \left(\frac{\varkappa_\nu k}{q}\right)
 +O((xQ)^{1/2}(\log x)^{C_1}).                 \label{principal-pointwise}
\end{align}
Let $D_\nu(k)$ be the expression inside the absolute value on the left
of \eqref{principal-L2}.
By \eqref{principal-pointwise} and the definition of
$\mathcal T_{\varkappa_\nu}(k,Q_1)$ in Lemma \ref{lem:tail-bound},
\[
 D_\nu(k)
 =-H_\nu(x,k)\mathcal T_{\varkappa_\nu}(k,Q_1)
 +O(\mathcal E_{\mathrm{count}}+\mathcal E_{\mathrm{ext}}).
\]
Lemma~\ref{lem:tail-bound} and $H_\nu(x,k)\leq x$ now give
\begin{align*}
 \sum_{\substack{k\leq y\\\mu^2(k)=1\\k>1}}|D_\nu(k)|^2
 &\ll x^2\sum_{\substack{k\leq y\\\mu^2(k)=1\\k>1}}
       |\mathcal T_{\varkappa_\nu}(k,Q_1)|^2
       +y\mathcal E_{\mathrm{count}}^2
       +y\mathcal E_{\mathrm{ext}}^2\\
 &\ll \frac{yx^2}{(\log x)^C}
       +yxQ(\log x)^{C_1}.
\end{align*}
We enlarge $C_1$ once more.  It then absorbs the square of the
logarithmic factor in \eqref{extension-error-sum}.  Also, for every
fixed $C$ and all sufficiently large $x$, we have
\[
 y\mathcal E_{\mathrm{count}}^2
 \ll y(\log x)^{2C_1}
 \ll\frac{yx^2}{(\log x)^C}.
\]
This proves \eqref{principal-L2}.
\end{proof}

\subsection{Nonprincipal major arc terms}

\begin{lemma}[Quadratic nonprincipal term]\label{lem:E2}
Let $z$ satisfy \eqref{z-range}.
There is a constant
$C_2=C_2(A,c,K_0)>0$ such that, uniformly for $s\in\{-1,+1\}$,
\begin{align}
 \int_{\majorarc}
 |T_{1,z}(\alpha)E_{2,s}(\alpha)|^2\,\dif\alpha
 &\ll_{A,c,K_0}
 \left(z+\frac{x^4}{Q}\right)(\log x)^{C_2}
\label{weighted-E2-bound}\\
 &\ll_{A,c,K_0}
 \frac{z^2x}{Q^2}(\log x)^{C_2}.             \label{weighted-E2-target}
\end{align}
\end{lemma}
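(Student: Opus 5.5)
The plan is to bound $E_{2,s}$ pointwise on each major arc by partial summation, using only that $\chi^2$ is nonprincipal. No large-sieve or Gallagher input is needed. Fix $q\le Q_1$, a reduced $a$, and $\alpha=a/q+\beta$ with $|\beta|\le 1/(qQ)$.

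\emph{Step 1 (pointwise bound for $T_{1,z}$).} Since $\sum_{m\le z}e(\beta m)\ll\min\{z,\|\beta\|^{-1}\}$, we have
\[
 |T_{1,z}(\alpha)|\ll\frac{1}{\varphi(q)}\min\{z,|\beta|^{-1}\}.
\]

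\emph{Step 2 (pointwise bound for $E_{2,s}$).} Fix $d\mid q$ and a character $\chi\bmod q^*_{d}$ with $\chi^2\ne\chi_0$. Consider the inner sum over $h=dn$ with $n\le x/d$ and $(n,q^*)=1$, weighted by $\overline{\chi}^{\,2}(n)$.

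First remove the coprimality condition $(n,q^*)=1$ by M\"obius inversion over divisors of $q^*$ coprime to $q^*_d$. This leaves sums of a nonprincipal character $\overline\chi^{\,2}$ (or of a nonprincipal character modulo a divisor of $q$) over intervals. Every partial sum $\sum_{n\le H}\overline\chi^{\,2}(n)\cdots$ is therefore $\ll \tau(q)\,q$, because the complete sums over periods vanish. Abel summation against the phase $e(s\beta d^2n^2)$, whose total variation on $n\le x/d$ is $\ll 1+|\beta|x^2$, then gives
\[
 \sum_{\substack{h\le x\\(h,q)=d}}\overline\chi^{\,2}(h/d)e(s\beta h^2)\ll \tau(q)\,q\,(1+|\beta|x^2).
\]
Next we insert this into \eqref{E2-definition}. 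We use $|\tau_{q^*_d}(\chi)|\le (q^*_d)^{1/2}$, and at most $\varphi(q^*_d)$ characters per $d$. Summing over $d\mid q$ yields
\[
 |E_{2,s}(\alpha)|\ll q^{O(1)}(1+|\beta|x^2)\ll (\log x)^{O(c)}(1+|\beta|x^2).
\]

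\emph{Step 3 (integration).} Combining Steps 1 and 2 and summing over the $\varphi(q)$ values of $a$, we get
\[
 \int_{\majorarc}|T_{1,z}E_{2,s}|^2\,\dif\alpha\ll(\log x)^{O(c)}\sum_{q\le Q_1}\int_{|\beta|\le 1/(qQ)}\min\{z^2,\beta^{-2}\}\,(1+|\beta|x^2)^2\,\dif\beta .
\]
The two pieces of the integrand are handled separately.
\begin{itemize}
\item The term $1$ contributes $\int\min\{z^2,\beta^{-2}\}\,\dif\beta\ll z$.
\item The term $\beta^2x^4$ contributes at most $x^4\int_{|\beta|\le1/(qQ)}\dif\beta\ll x^4/Q$, since $\min\{z^2,\beta^{-2}\}\beta^2\le 1$.
\end{itemize}
Summing over $q\le Q_1=(\log x)^c$ only costs another power of $\log x$. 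This gives \eqref{weighted-E2-bound} with $C_2$ depending on $c$, and, through \eqref{z-range}, on $A$ and $K_0$. Uniformity in $s$ is automatic, since $s=+1$ is the conjugate of $s=-1$.

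\emph{Step 4 (the second inequality).} Deduce \eqref{weighted-E2-target} from \eqref{z-range} and \eqref{circle-parameters}. We have $z^2x/Q^2\gg x^{3+2\delta}(\log x)^{-2A}$. This dominates both $z\ll x^2$ and $x^4/Q=x^{3+\delta}$, after enlarging $C_2$ to absorb $(\log x)^{2A}$.

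\emph{Main obstacle.} The delicate point is Step 2: making the bound on partial sums rigorous after splitting by $d=(h,q)$ and removing the coprimality condition. Here one must check carefully that the character actually summed over $n$ remains nonprincipal. This holds because $\chi^2\ne\chi_0$ modulo $q^*_d$, and the extra coprimality to $q^*$ only multiplies by a principal character. Consequently all complete sums vanish, and the partial sums are $O(q^{1+\varepsilon})$ uniformly in the interval endpoint. Once this is in place, the rest is elementary bookkeeping of powers of $q\le Q_1$.
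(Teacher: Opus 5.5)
Your proposal is correct and follows essentially the same route as the paper. Both arguments combine three ingredients: a pointwise bound $|E_{2,s}(a/q+\beta)|\ll q^{O(1)}(1+|\beta|x^2)$ obtained by partial summation against the nonprincipal character $\overline{\chi}^{\,2}$, the bound $|T_{1,z}|\ll\varphi(q)^{-1}\min\{z,|\beta|^{-1}\}$, and summation over $a$ followed by integration in $\beta$. The differences are cosmetic: the paper restricts at once to squarefree $q$ (which the factor $\mu(q)$ in $T_{1,z}$ permits) and uses P\'olya--Vinogradov, whereas your trivial period bound already suffices since $q\le(\log x)^c$; and your splitting $(1+|\beta|x^2)^2\ll1+\beta^2x^4$ avoids the cross term that the paper integrates explicitly.
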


\begin{proof}
The factor $\mu(q)$ in $T_{1,z}$ means that we only need to consider squarefree $q$.
Fix such a $q$ and write
\[
 \alpha=\frac aq+\beta,
 \qquad
 |\beta|\leq\frac1{qQ}.
\]
For every $d\mid q$, put
\[
 r=\frac qd=q^*.
\]
Since $q$ is squarefree, we have
\[
 (d,r)=1,\qquad
 d_{q^*}=d,\qquad
 q^*_{d}=r.
\]
The condition $(a,q)=1$ also gives $(sad,r)=1$.
We also have
\[
 (h,q)=d
 \quad\Longleftrightarrow\quad
 h=du\ \hbox{ and }\ (u,r)=1.
\]
A Dirichlet character modulo $r$ vanishes outside the reduced residue classes.
Thus the inner sum in \eqref{E2-definition} becomes
\[
 F_{d,\chi}(\beta)
 :=
 \sum_{u\leq x/d}
 \overline{\chi}^{\,2}(u)e(s\beta d^2u^2).
\]

Suppose that $\chi^2\ne\chi_0$ modulo $r$.
The character sum bound of P\'olya and Vinogradov is also valid for an
imprimitive nonprincipal character.
It gives
\[
 \max_{U\geq1}
 \left|\sum_{u\leq U}\overline{\chi}^{\,2}(u)\right|
 \ll r^{1/2}\log(2r).
\]
Partial summation then gives
\begin{align}
 |F_{d,\chi}(\beta)|
 &\ll r^{1/2}\log(2r)
 \left(1+|\beta|d^2\left(\frac xd\right)^2\right)\notag\\
 &\ll r^{1/2}\log(2r)(1+|\beta|x^2).
\label{F-character-bound}
\end{align}

Since $r$ is squarefree, the Chinese remainder theorem shows that
$\tau_r(\chi)$ factors as a product of local Gauss sums over the primes
$p\mid r$.
A principal local component has absolute value $1$, while a
nonprincipal local component has absolute value $\sqrt p$.
Hence
\begin{equation}\label{squarefree-Gauss-bound}
 |\tau_r(\chi)|\leq r^{1/2}
\end{equation}
for every character $\chi$ modulo $r$.

Using \eqref{F-character-bound} and
\eqref{squarefree-Gauss-bound} in \eqref{E2-definition}, we obtain
\begin{align}
 \left|E_{2,s}\left(\frac aq+\beta\right)\right|
 &\leq
 \sum_{d\mid q}\frac1{\varphi(r)}
 \sum_{\substack{\chi\bmod r\\\chi^2\ne\chi_0}}
 |\tau_r(\chi)|\,|F_{d,\chi}(\beta)|\notag\\
 &\ll
 \sum_{d\mid q}r\log(2r)(1+|\beta|x^2)\notag\\
 &\ll
 \sigma(q)\log(2q)(1+|\beta|x^2).
    \label{E2-pointwise}
\end{align}
Here $\sigma(q)=\sum_{d\mid q}d$.
This estimate is uniform in $a$ and in the sign $s$.

On the same arc,
\begin{equation}\label{T1-geometric-bound}
 \left|T_{1,z}\left(\frac aq+\beta\right)\right|
 \ll
 \frac1{\varphi(q)}
 \min\left\{z,\frac1{|\beta|}\right\}.
\end{equation}
Put $W(q)=\sigma(q)\log(2q)$.
The major arcs are disjoint.
We first sum over the reduced residue classes $a$ and then use
\eqref{E2-pointwise} and \eqref{T1-geometric-bound}.
This gives
\begin{equation}
 \int_{\majorarc}
 |T_{1,z}(\alpha)E_{2,s}(\alpha)|^2\,\dif\alpha
\ll
 \sum_{\substack{q\leq Q_1\\\mu^2(q)=1}}
 \frac{W(q)^2}{\varphi(q)}
 \int_0^{1/(qQ)}
 \min\left\{z,\frac1\beta\right\}^2
 (1+\beta x^2)^2\,\dif\beta.
\label{weighted-E2-sum}
\end{equation}

For sufficiently large $x$, the inequalities in \eqref{z-range} give
$qQ<z$ for every $q\leq Q_1$.
Splitting the last integral at $\beta=1/z$, we find
\begin{align}
 &\int_0^{1/(qQ)}
 \min\left\{z,\frac1\beta\right\}^2
 (1+\beta x^2)^2\,\dif\beta\notag\\
 &\quad\ll
 z\left(1+\frac{x^2}{z}\right)^2
 +\int_{1/z}^{1/(qQ)}
\left(\beta^{-2}+2x^2\beta^{-1}+x^4\right)\,\dif\beta\notag\\
 &\quad\ll
 z(\log x)^{2A+2}+\frac{x^4}{qQ}.
\label{beta-weighted-bound}
\end{align}
Here we used $x^2/z\ll(\log x)^A$.

Since $q\leq Q_1=(\log x)^c$, elementary divisor estimates give
\[
 \sum_{q\leq Q_1}\frac{W(q)^2}{\varphi(q)}
 \ll(\log x)^{C_2},
 \qquad
 \sum_{q\leq Q_1}\frac{W(q)^2}{q\varphi(q)}
 \ll(\log x)^{C_2}
\]
after enlarging $C_2$.
Equations \eqref{weighted-E2-sum} and \eqref{beta-weighted-bound} prove \eqref{weighted-E2-bound}.

Finally, $Q=x^{1-\delta}$ and $z\geq x^2(\log x)^{-A}$ give both
$z\ll z^2x/Q^2$ and $x^4/Q\ll z^2x/Q^2$.  This proves
\eqref{weighted-E2-target}.
\end{proof}

We next estimate $E_{1,z}$ using the short-interval estimates in
Lemma~\ref{lem:short-interval-input}.

\begin{lemma}[Prime nonprincipal term]\label{lem:E1}
Let $c>0$ be fixed. For $z\in\{z_+,z_-,z_0\}$ and every $C>0$, one has
\begin{equation}\label{E1-bound}
 \int_{\majorarc}|E_{1,z}(\alpha)|^2\,\dif\alpha
 \ll_{C,A,K_0}\frac z{(\log x)^C}.
\end{equation}
\end{lemma}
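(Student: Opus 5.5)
The plan follows the standard Baier--Zhao treatment of $E_1$. Fix $q\le Q_1$ and write $\alpha=a/q+\beta$ with $|\beta|\le 1/(qQ)$. Put
\[
 \Phi_\chi(\beta)=\sum_{m\le z}A_\chi(m)e(\beta m).
\]
Since the arcs are disjoint, I first integrate over each arc and then sum over $a$ and $q$.

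Expanding the square in the definition of $E_{1,z}$ and summing over the reduced residues $a\bmod q$ uses orthogonality of $\overline\chi(a)$. This removes the cross terms between different characters and gives
\[
 \sum_{a\bmod q}^{*}\Bigl|E_{1,z}\Bigl(\frac aq+\beta\Bigr)\Bigr|^2
 =\frac{1}{\varphi(q)}\sum_{\chi\bmod q}|\tau_q(\chi)|^2\,|\Phi_\chi(\beta)|^2 .
\]
Bounding $|\tau_q(\chi)|^2\le q$ (this holds for every character modulo a squarefree $q$ by \eqref{squarefree-Gauss-bound}; for non-squarefree $q$ one can use the trivial bound $|\tau_q(\chi)|\le q$, which costs only a power of $\log x$) yields
\[
 \int_{\majorarc}|E_{1,z}|^2\,\dif\alpha
 \ll \sum_{q\le Q_1}\frac{q^2}{\varphi(q)}\sum_{\chi\bmod q}
 \int_{|\beta|\le 1/(qQ)}|\Phi_\chi(\beta)|^2\,\dif\beta .
\]

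Next I split $m\le z$ dyadically into $O(\log x)$ ranges $N<m\le N'\le 2N$, handling $m\le z^{1-\varepsilon}$ trivially. Applying Cauchy--Schwarz costs a further factor $\log x$. For each dyadic block I apply Gallagher's lemma \eqref{Gallagher-exact} with $\Delta=qQ$, which is admissible since $Q=x^{1-\delta}$ and $N\asymp z\asymp x^2$ up to logarithms give $N^{1/5+\varepsilon}\le qQ\le N^{1-\varepsilon}$. The integral is then bounded by
\[
 (qQ)^{-2}\int\Bigl|\sum_{t<m\le t+qQ/2}A_\chi(m)\Bigr|^2\,\dif t .
\]
Summed over $\chi$, this is exactly a quantity of the form $J_N(q,Q/2)$ from \eqref{J-definition}, up to the end intervals of length $\ll qQ$. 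The end intervals contribute $\ll (qQ)^{-2}\cdot qQ\cdot (qQ\log x)^2$ per character, which is $\ll qQ(\log x)^2$ and negligible against $z$. Applying \eqref{Wolke-Mikawa-input} with $b=c$ and arbitrary $L$ gives
\[
 \sum_\chi\int|\Phi_\chi|^2\ll N(\log N)^{-L}.
\]
Summing over $q\le Q_1$ with the weight $q^2/\varphi(q)\ll Q_1(\log\log x)$, and over the dyadic blocks, gives $z(\log x)^{-L+2c+O(1)}$. Taking $L$ large in terms of $C$ and $c$ yields \eqref{E1-bound}.

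I expect the main obstacle to be bookkeeping rather than substance. The points to check are that the Gallagher window $\Delta=qQ$ (or $qQ/2$) really lies in the Mikawa range uniformly for all dyadic $N$ that are not handled trivially, and that the small-$m$ range $m\le z^{1-\varepsilon}$ is bounded by
\[
 \int_{\majorarc}(\cdot)\ll |\majorarc|\cdot\bigl(\tfrac{q}{\varphi(q)}\cdot z^{1-\varepsilon}\bigr)^2,
\]
which is acceptable since $|\majorarc|\ll Q_1^2/Q$. The principal-character normalization $\Lambda(m)-1$ matches the $\#$ convention, and imprimitive characters are covered by the cited formulation of Lemma~\ref{lem:short-interval-input}.
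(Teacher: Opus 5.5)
Your route is the paper's: orthogonality in $a$, Gallagher's lemma with $H=qQ$, and Mikawa's bound for $J_N(q,Q/2)$ on dyadic blocks, plus a trivially treated initial segment. The long-range part is fine, since for dyadic $N\in[z^{1-\varepsilon},z]$ one does have $N^{1/5+\varepsilon}\le Q/2\le N^{1-\varepsilon}$ and $q\le(\log N)^c$.

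The step that fails is the short range $m\le M:=z^{1-\varepsilon}$. Write $E_{1,z}^{\leq M}$ for the part of $E_{1,z}$ with $m\le M$. Your bound $|\majorarc|\cdot\bigl(\frac{q}{\varphi(q)}M\bigr)^2$ is at least $2M^2/Q$, because the arc at $q=1$ alone has measure $2/Q$. Since $z\gg x^2(\log x)^{-A}$ and $Q=x^{1-\delta}$,
\[
 \frac{M^2}{Q}=z\cdot\frac{z^{1-2\varepsilon}}{Q}\gg z\,x^{1+\delta-4\varepsilon}(\log x)^{-A}.
\]
This exceeds $z$ by a positive power of $x$ unless $\varepsilon>(1+\delta)/4$. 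So for the small $\varepsilon$ you intend, this estimate is a loss, not a saving. Sup times measure cannot work once $M$ is much larger than $\sqrt{zQ}\approx x^{(3-\delta)/2}$.

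There are two easy repairs. The first is to use a mean square on the short range. Start from your own orthogonality identity with $|\tau_q(\chi)|\le q^{1/2}$; this bound holds for every modulus, because $\tau_q(\chi)=\mu(q/q^*)\chi^*(q/q^*)\tau(\chi^*)$ when $\chi$ is induced by a primitive $\chi^*$ modulo $q^*$. Extending the $\beta$-integral to a full period and applying Parseval gives
\[
 \int_{\majorarc}|E_{1,z}^{\leq M}(\alpha)|^2\,\dif\alpha
 \le\sum_{q\le Q_1}\frac{q}{\varphi(q)}\sum_{\chi\bmod q}\sum_{m\le M}|A_\chi(m)|^2
 \ll Q_1^2M\log M,
\]
which is a power saving for $M=z^{1-\varepsilon}$.

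The second is what the paper does: lower the cutoff to $R=(2Q)^{1/(1-\eta_0)}$, just above $Q$. Mikawa's range $N^{1/5+\eta_0}\le Q/2\le N^{1-\eta_0}$ still holds for every dyadic $N\in[R,z]$. The right inequality is exactly $N\ge R$, and the left one follows from $N\le z\ll x^2$ together with $2(1/5+\eta_0)<1-\delta$. With this cutoff even a crude treatment of the short part suffices. The paper writes $E_{1,z}^{\leq R}=S_{1,R}-T_{1,R}+O((\log R)^2)$, uses Parseval for $S_{1,R}$, and uses $|T_{1,R}|\le R$ with $\operatorname{meas}(\majorarc)\ll Q_1/Q$. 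This gives $R\log(2R)+Q_1R^2/Q\ll x^{2-\eta_1}$.
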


\begin{proof}
Recall that $0<\delta<\frac1{10}$ in \eqref{circle-parameters}.
Choose $0<\eta_0<\delta$ small enough that
$2(\frac15+\eta_0)<1-\delta$, and put
\[
 R=(2Q)^{1/(1-\eta_0)}.
\]
For $\alpha=a/q+\beta\in\majorarc$, define
\begin{align*}
 E_{1,z}^{\leq R}(\alpha)
  &=\frac1{\varphi(q)}
    \sum_{\chi\bmod q}\tau_q(\chi)\overline\chi(a)
    \sum_{m\leq R}A_\chi(m)e(\beta m),\\
 E_{1,z}^{>R}(\alpha)
  &=E_{1,z}(\alpha)-E_{1,z}^{\leq R}(\alpha).
\end{align*}
For sufficiently large $x$, we have $R<z$.
We split the second expression into $O(\log x)$ dyadic blocks of the form
$N<m\leq N'\leq2N$.

We first record the character orthogonality used for each block.
Put
\[
 F_{\chi,N}(\beta)
 =\sum_{N<m\leq N'}A_\chi(m)e(\beta m),
\]
where $A_\chi$ is defined in \eqref{def Achi}.
Orthogonality on the reduced residue classes and
$|\tau_q(\chi)|\leq q^{1/2}$ give
\begin{equation}\label{E1-character-orthogonality}
 \sum_{a\bmod q}^{*}
 \left|
  \frac1{\varphi(q)}
  \sum_{\chi\bmod q}
  \tau_q(\chi)\overline\chi(a)F_{\chi,N}(\beta)
 \right|^2
 \leq
 \frac q{\varphi(q)}
 \sum_{\chi\bmod q}|F_{\chi,N}(\beta)|^2.
\end{equation}

Conjugation permutes the characters modulo $q$.
Therefore, the sum of the short interval second moments built from
$A_\chi$ is exactly the quantity $J_N(q,\Delta)$ in \eqref{J-definition}.
If $R\leq N\leq z$, then, for all sufficiently large $x$,
\begin{equation*}\label{Wolke-range}
 N^{1/5+\eta_0}\leq Q/2\leq N^{1-\eta_0},
\end{equation*}
and $q\leq Q_1\leq(\log N)^{c+1}$.
The first inequality follows from $N\leq z\ll_{K_0}x^2$ and
$2(1/5+\eta_0)<1-\delta$.
The second follows from $N\geq R$ since $R^{1-\eta_0}=2Q$.
We also have
\begin{equation*}\label{Gallagher-parameter-check}
 \frac{N}{qQ}\geq\frac{R}{Q_1Q}
 \gg\frac{Q^{\eta_0/(1-\eta_0)}}{(\log x)^c}\longrightarrow\infty.
\end{equation*}
Thus $2<qQ<N/2$ as required in
\eqref{Gallagher-exact}.
Lemma~\ref{lem:short-interval-input}, with
$b=c+1$ and $\varepsilon=\eta_0$, gives for every $L>0$
\begin{equation}\label{Wolke-Mikawa-bound}
 J_N(q,Q/2)
 \ll_{L,c,\eta_0}(qQ)^2N(\log N)^{-L}.
\end{equation}

Put $H=qQ$.
The inner sums in \eqref{Gallagher-exact} have length $H/2$.
The formula gives
\begin{equation}\label{Gallagher-truncated}
 I_{\chi,N}
 :=\int_{|\beta|\leq H^{-1}}
 |F_{\chi,N}(\beta)|^2\,\dif\beta
 \ll H^{-2}\int_{N-H/2}^{N'}
 \left|\sum_{\max\{t,N\}<m\leq
 \min\{t+H/2,N'\}}A_\chi(m)\right|^2\,\dif t.
\end{equation}
On the interior range $N\leq t\leq N'-H/2$, the sum in
\eqref{Gallagher-truncated} is the full sum over $t<m\leq t+H/2$.
The two end ranges have total length $O(H)$.
On these ranges, the sum has $O(H)$ terms and
$|A_\chi(m)|\ll\log x$.
Thus their total contribution before multiplication by $H^{-2}$ is
$O(H^3(\log x)^2)$.
It follows that
\begin{align*}
 I_{\chi,N}
 &\ll \frac1{(qQ)^2}\int_N^{2N}
 \left|\sum_{t<m\leq t+qQ/2}A_\chi(m)\right|^2\,\dif t
 +qQ(\log x)^2.                              \label{Gallagher-block}
\end{align*}
Summing over the characters and using again that conjugation permutes
them, we obtain
\begin{equation}\label{Gallagher-character-sum}
 \sum_{\chi\bmod q}I_{\chi,N}
 \ll \frac{J_N(q,Q/2)}{(qQ)^2}
      +\varphi(q)qQ(\log x)^2.
\end{equation}

Let $D_0\ll\log x$ be the number of dyadic blocks.
Cauchy's inequality over these blocks, followed by
\eqref{E1-character-orthogonality} and
\eqref{Gallagher-character-sum}, gives
\begin{align}
 \int_{\majorarc}|E_{1,z}^{>R}(\alpha)|^2\,\dif\alpha
 &\ll D_0
 \sum_{\substack{N\ \mathrm{dyadic}\\R\leq N\leq z}}
 \sum_{q\leq Q_1}\frac q{\varphi(q)}
 \frac{J_N(q,Q/2)}{(qQ)^2}
 +D_0^2Q_1^3Q(\log x)^2.
 \label{E1-long-blocks}
\end{align}
For one block and one $q$, the endpoint contribution is at most
$q^2Q(\log x)^2$.
We also have $\sum_{q\leq Q_1}q^2\ll Q_1^3$.
We now use \eqref{Wolke-Mikawa-bound} together with
\[
 \sum_{q\leq Q_1}\frac q{\varphi(q)}
 \ll Q_1\log(2Q_1),
 \qquad
 \sum_{\substack{N\ \mathrm{dyadic}\\R\leq N\leq z}}N\ll z,
\]
and obtain that the first term in \eqref{E1-long-blocks} is
$O_{C,A,K_0}(z(\log x)^{-C})$ if we take, for example,
$L>C+c+5$.
The second term is $x^{1-\delta}(\log x)^{O(c)}$.
It has the same bound because $z\geq x^2(\log x)^{-A}$.
This is the full calculation used in
\cite[(6.6) to (6.8)]{BaierZhao2007}.

For the short part, the character decomposition
\eqref{S1-decomposition} with cutoff $R$ gives the identity
\begin{equation*}\label{E1-short-identity}
 E_{1,z}^{\leq R}\left(\frac aq+\beta\right)
 =S_{1,R}\left(\frac aq+\beta\right)
 -T_{1,R}\left(\frac aq+\beta\right)
  +O((\log R)^2).
\end{equation*}
Here the error is uniform for $q\leq Q_1$.
The terms excluded by $(m,q)=1$ have total weight at most
\[
 \sum_{p\mid q}\sum_{\substack{j\geq1\\p^j\leq R}}\log p
 \leq\omega(q)\log R\ll(\log R)^2.
\]
Parseval's identity, the bound $|T_{1,R}|\leq R$, and
$\operatorname{meas}(\majorarc)\ll Q_1/Q$ give
\begin{align*}
 \int_{\majorarc}|E_{1,z}^{\leq R}(\alpha)|^2\,\dif\alpha
 &\ll \sum_{m\leq R}\Lambda(m)^2
      +\frac{Q_1R^2}{Q}
      +\frac{Q_1}{Q}(\log R)^4\notag\\
 &\ll R\log(2R)+\frac{Q_1R^2}{Q}.
 \label{E1-short-blocks}
\end{align*}

Finally,
\[
 \frac{R^2}{Q}
 =2^{2/(1-\eta_0)}Q^{(1+\eta_0)/(1-\eta_0)}
 \ll x^{2-\eta_1}
\]
for some $\eta_1>0$.
Also $R\ll x^{1-\eta_2}$ for some $\eta_2>0$.
The endpoint term in \eqref{E1-long-blocks} is
$O(x^{1-\delta}(\log x)^{O(c)})$.
Since $z\geq x^2(\log x)^{-A}$, all these power saving terms are
\[
 O_{C,A,K_0}\bigl(z(\log x)^{-C}\bigr).
\]
This proves \eqref{E1-bound}.
\end{proof}

Combining \eqref{S1-decomposition} and \eqref{S2-decomposition}
and regrouping the terms, we obtain
\begin{equation}\label{recombined-major-decomposition}
 S_{1,z}S_{2,s}
 =T_{1,z}T_{2,s}+T_{1,z}E_{2,s}+E_{1,z}S_{2,s}
 +R_{q,z}S_{2,s}.
\end{equation}
Bessel's inequality and the preceding estimates give the following
bounds for the remaining major arc contributions.

\begin{lemma}[Remaining major arc terms]\label{lem:mixed-major}
For every $C>0$ and $z=z_\nu$, one has
\begin{align}
 \sum_{k\leq y}
 \left|
 \int_{\majorarc}T_{1,z}E_{2,s_\nu}
 e(t_\nu k\alpha)\,\dif\alpha
 \right|^2
 &\ll
 \frac{z^2x}{Q^2}(\log x)^{C_2},
\label{T1E2-bound}\\
 \sum_{k\leq y}
 \left|
 \int_{\majorarc}E_{1,z}S_{2,s_\nu}
 e(t_\nu k\alpha)\,\dif\alpha
 \right|^2
 &\ll_{C,A,K_0}
 \frac{x^2z}{(\log x)^C}.                     \label{E1S2-bound}
\end{align}
The remainder $R_{q,z}$ in \eqref{S1-decomposition} contributes $O(\frac{yx^2Q_1^2}{Q^2}(\log x)^4)$
to the total second moment.
\end{lemma}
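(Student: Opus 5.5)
The common tool for all three bounds is Bessel's inequality. For any function $G$ supported on $\majorarc\subset\mathcal I$, an interval of length one, the numbers $\int_{\majorarc}G(\alpha)e(t_\nu k\alpha)\,\dif\alpha$ are Fourier coefficients of $G$, indexed by distinct integers $t_\nu k$. Hence
\[
 \sum_{k\leq y}\left|\int_{\majorarc}G(\alpha)e(t_\nu k\alpha)\,\dif\alpha\right|^2
 \leq\int_{\majorarc}|G(\alpha)|^2\,\dif\alpha .
\]
Since $|t_\nu|=1$, this holds uniformly in $\nu$. Each of the three estimates thus reduces to an $L^2$ bound over $\majorarc$ for the corresponding product.

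For \eqref{T1E2-bound} I take $G=T_{1,z}E_{2,s_\nu}$ and apply Lemma~\ref{lem:E2}, specifically \eqref{weighted-E2-target}. This gives the bound $z^2xQ^{-2}(\log x)^{C_2}$ directly. Recall that $z_\nu$ satisfies \eqref{z-range}, as that lemma requires.

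For \eqref{E1S2-bound} I take $G=E_{1,z}S_{2,s_\nu}$ and use the trivial bound $|S_{2,s}(\alpha)|\leq x$. This gives
\[
 \int_{\majorarc}|E_{1,z}S_{2,s_\nu}|^2\,\dif\alpha\leq x^2\int_{\majorarc}|E_{1,z}|^2\,\dif\alpha ,
\]
and Lemma~\ref{lem:E1} then yields $x^2z(\log x)^{-C}$ for every $C$. No weighted estimate is needed here: the full sum $S_2$ appears precisely because of the regrouping in \eqref{recombined-major-decomposition}. That regrouping was designed so that $E_1$ is paired with $S_2$ rather than with $T_2$ or $E_2$ separately.

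For the remainder term I take $G=R_{q,z}S_{2,s_\nu}$. Here $q$ is the denominator of the arc containing $\alpha$, so $G$ is a well-defined function on $\majorarc$ because the arcs are disjoint. I combine the pointwise bound $|R_{q,z}|\ll(\log(2z))^2\ll(\log x)^2$ with $|S_2|\leq x$. The measure of $\majorarc$ is at most $\sum_{q\leq Q_1}\varphi(q)\cdot 2/(qQ)\ll Q_1/Q$. This gives $\int_{\majorarc}|G|^2\ll x^2(\log x)^4Q_1/Q$, which is at least as good as the stated $yx^2Q_1^2Q^{-2}(\log x)^4$ when $y\gg Q$.

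To match the stated form exactly, I could instead apply Cauchy--Schwarz to each $k$ separately. This gives
\[
 \left|\int_{\majorarc}G(\alpha)e(t_\nu k\alpha)\,\dif\alpha\right|^2
 \leq\left(\int_{\majorarc}|G(\alpha)|\,\dif\alpha\right)^2
 \ll\left(x(\log x)^2\frac{Q_1}{Q}\right)^2 ,
\]
and summing over the $y$ values of $k$ gives exactly $yx^2Q_1^2Q^{-2}(\log x)^4$. Either route works.

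There is no real obstacle in this argument. The only points requiring care are checking that Bessel's inequality applies to functions restricted to $\majorarc$ (they are square-integrable on a unit-length interval, and the frequencies $t_\nu k$ are distinct integers), and remembering that $q$ varies with the arc.
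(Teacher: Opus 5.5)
Your proposal is correct and matches the paper's proof: Bessel's inequality on the unit interval $\mathcal I$ combined with Lemma~\ref{lem:E2} gives the first bound, Bessel with $|S_{2,s_\nu}|\leq x$ and Lemma~\ref{lem:E1} gives the second, and for $R_{q,z}$ the paper uses exactly your second route. That route is the per-$k$ bound $O(xQ_1(\log x)^2/Q)$ from $\operatorname{meas}(\majorarc)\ll Q_1/Q$, squared and summed over $k\leq y$. Your Bessel variant for the remainder term is a valid sharpening, since $Q/(yQ_1)\to0$ in the stated range of $y$.
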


\begin{proof}
Extend the integrands by zero to $\mathcal I$.
Bessel's inequality and Lemma~\ref{lem:E2} give \eqref{T1E2-bound}.
The same argument, with $|S_{2,s_\nu}|\leq x$ and
Lemma~\ref{lem:E1}, gives \eqref{E1S2-bound}.
Finally, $\operatorname{meas}(\majorarc)\ll Q_1/Q$.
Thus the contribution of $R_{q,z}S_{2,s_\nu}$ is
$O(xQ_1(\log x)^2/Q)$ for each $k$.
Squaring and summing proves the last assertion.
\end{proof}

\subsection{Minor arcs and completion of the mean square estimates}

\begin{lemma}[Minor arc contribution]\label{lem:minor-arcs}
Uniformly in $\nu\in\{+,-,0\}$,
\begin{equation}
\sum_{k\leq y}\left|\int_{\minorarc}
 S_{1,z_\nu}(\alpha)S_{2,s_\nu}(\alpha)e(t_\nu k\alpha)
 \,\dif\alpha\right|^2
\ll z_\nu\log(2z_\nu)(\log x)^2
 \left(\frac{x^2}{Q_1}+Qx\right).              \label{minor-error-general}
\end{equation}
\end{lemma}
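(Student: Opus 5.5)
The plan is the standard Bessel-plus-Weyl argument of Baier and Zhao. The $k$-sum is a sum of squared Fourier coefficients, so Bessel's inequality removes it. We then bound the minor-arc integrand pointwise, using Weyl's inequality for the quadratic sum and Parseval for the prime sum.

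\textbf{Step 1: remove the sum over $k$.} Extend $S_{1,z_\nu}S_{2,s_\nu}\mathbf 1_{\minorarc}$ by zero to the unit-length interval $\mathcal I$. For each $k$, the integral
\[
\int_{\minorarc}S_{1,z_\nu}(\alpha)S_{2,s_\nu}(\alpha)e(t_\nu k\alpha)\,\dif\alpha
\]
is the Fourier coefficient of this function at frequency $-t_\nu k$. These frequencies are distinct for distinct $k$, and $t_\nu=\pm1$ only reflects them. Bessel's inequality therefore bounds the left side of \eqref{minor-error-general} by
\[
\int_{\minorarc}|S_{1,z_\nu}(\alpha)|^2|S_{2,s_\nu}(\alpha)|^2\,\dif\alpha
\le \sup_{\alpha\in\minorarc}|S_{2,s_\nu}(\alpha)|^2\int_0^1|S_{1,z_\nu}(\alpha)|^2\,\dif\alpha .
\]
Parseval and the prime number theorem give
\[
\int_0^1|S_{1,z}|^2\,\dif\alpha=\sum_{m\le z}\Lambda(m)^2\ll z\log(2z).
\]
This produces the factor $z_\nu\log(2z_\nu)$. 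Since $|S_{2,-1}|=|S_{2,1}|$ by conjugation, the estimate is uniform in $\nu$.

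\textbf{Step 2: a Weyl bound on the minor arcs.} It remains to show
\[
\sup_{\alpha\in\minorarc}|S_{2,s}(\alpha)|^2\ll(\log x)^2\left(\frac{x^2}{Q_1}+Qx\right).
\]
Take $\alpha\in\minorarc\subset[1/Q,1+1/Q]$. By Dirichlet's approximation theorem there are coprime $a,q$ with $1\le q\le Q$ and $|\alpha-a/q|\le 1/(qQ)$. Because $\alpha\in\mathcal I$, one has $1\le a\le q$; the only boundary cases are handled by the choice of the interval $\mathcal I$.

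If $q\le Q_1$, then $\alpha$ would lie in $\majorarc$, a contradiction. Hence $Q_1<q\le Q$.

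The classical Weyl inequality for degree two, proved by squaring and differencing, gives
\[
|S_{2,s}(\alpha)|^2\ll x\log(2q)\,+\,\frac{x^2\log(2q)}{q}\,+\,q\log(2q)\ll(\log x)\left(x+\frac{x^2}{q}+q\right).
\]
Now insert $Q_1<q\le Q$:
\begin{itemize}
\item the term $x^2/q$ is at most $x^2/Q_1$;
\item the term $q$ is at most $Q\le Qx$;
\item the term $x$ is at most $Qx$.
\end{itemize}
Thus $\sup_{\minorarc}|S_{2,s}|^2\ll\log x\,(x^2/Q_1+Qx)$, which is even stronger than the claim. The extra logarithm in the statement leaves room for a cruder version of Weyl's inequality.

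\textbf{Main obstacle.} The only point needing care is the bookkeeping of the arcs. One must check that the Dirichlet approximation with parameter $Q$ puts every point of $\mathcal I$ with $q\le Q_1$ into $\majorarc$, including near the endpoints of $\mathcal I$ where $a=q$ corresponds to $a/q=1$. One must also check that the Weyl bound is uniform in $a$ and in the sign $s$. Both points are standard.
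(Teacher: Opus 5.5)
Your proposal is correct and follows the paper's proof: you use Bessel's inequality on $\mathcal I$ to remove the $k$-sum, then Parseval to get $\int_0^1|S_{1,z_\nu}|^2\ll z_\nu\log(2z_\nu)$, and then Weyl's inequality with Dirichlet approximation for $\sup_{\minorarc}|S_{2,s_\nu}|^2$, where $q\le Q_1$ forces $\alpha\in\majorarc$. Your remark that the quadratic Weyl bound gives this supremum with fewer logarithms than stated is also right, and the endpoint bookkeeping is cleanest if you use the strict form $|\alpha-a/q|<1/(qQ)$ of Dirichlet's theorem, which rules out $a=0$ at $\alpha=1/Q$.
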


\begin{proof}
Extend the integrand by zero to $\mathcal I$.
Bessel's inequality bounds the left side by
\[
 \sup_{\alpha\in\minorarc}|S_{2,s_\nu}(\alpha)|^2
 \int_0^1|S_{1,z_\nu}(\alpha)|^2\,\dif\alpha.
\]
Weyl's inequality and Dirichlet approximation bound the first factor
by $(\log x)^2(x^2/Q_1+Qx)$.
This bound does not depend on the sign $s_\nu$.
Parseval's identity bounds the second factor by $z_\nu\log(2z_\nu)$.
This proves the lemma.
\end{proof}

\begin{proof}[Proof of Theorems~\ref{thm:positive-main},
\ref{thm:negative-main}, and~\ref{thm:reverse-main}]
We combine Lemmas~\ref{lem:principal-arcs}, \ref{lem:mixed-major}, and
\ref{lem:minor-arcs}.
The grouping in \eqref{recombined-major-decomposition} contains every major arc term.
Write
\[
 \mathcal E=\frac{yx^2}{(\log x)^B}.
\]
First take a saving larger than $B+5$ in
Lemma~\ref{lem:tail-bound} and a saving larger than $A+B+5$ in
Lemma~\ref{lem:E1}.
Next choose $c$ large enough for the first choice and also so that
$c>A+B+10$.
After fixing $c$, the exponents $C_1$ and $C_2$ in the previous lemmas are fixed.

Since
\[
 \mathcal E
 \geq\frac{x^4}{(\log x)^{A+B}}
\]
and $z_\nu\ll_{K_0}x^2$, the right side of
\eqref{T1E2-bound} is
\[
 O\left(x^{3+2\delta}(\log x)^{C_2}\right)
 =o\left(\frac{x^4}{(\log x)^{A+B}}\right),
\]
because $2\delta<1$.
For \eqref{E1S2-bound}, the ratio to $\mathcal E$ is at most
\[
 \frac{z_\nu}{y}(\log x)^{B-C}
 \ll_{A,K_0}(\log x)^{A+B-C},
\]
which tends to zero with the saving chosen above.

The second term in \eqref{principal-L2} and the $Qx$ term in
\eqref{minor-error-general}, after division by $\mathcal E$, are
\[
 O\left(x^{-\delta}(\log x)^{A+B+O_c(1)}\right)=o(1).
\]
The $x^2/Q_1$ term in \eqref{minor-error-general} divided by
$\mathcal E$ is
\[
 O\left((\log x)^{A+B+3-c}\right)=o(1).
\]
Finally, the second moment contribution of the remainder $R_{q,z}$ in \eqref{S1-decomposition} divided by
$\mathcal E$ is
\[
 O\left(\frac{Q_1^2(\log x)^{B+4}}{Q^2}\right)
 =O\left(x^{-2+2\delta}(\log x)^{2c+B+4}\right)=o(1).
\]
The principal arc lemmas do not include $k=1$.
This value occurs only in the positive-shift theorem.
The trivial estimate makes its squared error $O(x^2(\log x)^2)$.
This is $o(x^4(\log x)^{-A-B})$.
This proves all three estimates.
The constants are uniform in the stated ranges.
\end{proof}
\section{From prime values to prime index ratios}\label{sec:counting}

We first record the pointwise interval consequences of the three mean
square theorems.  The first lemma fixes the prime $q$ and treats
$h^2+q$ and $h^2-q$.  The second lemma fixes the larger prime $P$ and
treats $P-h^2$.

\begin{lemma}[Box estimate with a fixed shift]
\label{lem:good-small}
Fix $0<b_1<b_2$ and $0<c_1<c_2$.  Given $C>0$, all but
$O_C(X(\log X)^{-C})$ primes $q\in(b_1X,b_2X]$ have a prime of the form
\begin{equation}\label{plus-prime-in-box}
 p=h^2+q,
 \qquad c_1\sqrt X<h\leq c_2\sqrt X.
\end{equation}
If $b_2<c_1^2$, the same result holds with
\begin{equation}\label{minus-prime-in-box}
 p=h^2-q,
 \qquad c_1\sqrt X<h\leq c_2\sqrt X.
\end{equation}
In both cases, every nonexceptional $q$ has
\begin{equation}\label{many-small-endpoints}
 \gg_{b_1,b_2,c_1,c_2}\frac{\sqrt X}{(\log X)^2}
\end{equation}
values of $h$ for which the represented value is prime.  The implied
constants may depend on the fixed box.
\end{lemma}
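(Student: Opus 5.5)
We obtain the interval count as the difference of two instances of Corollary~\ref{cor:pointwise}, taken at the two endpoints $x_2=c_2\sqrt X$ and $x_1=c_1\sqrt X$. Both have $x_i\asymp\sqrt X$ and $\log x_i\asymp\log X$. Set $y=b_2X$, so $y\asymp x_i^2$. For the plus case we apply the positive-shift statement \eqref{positive-absolute-corollary} with a fixed $K_0\geq b_2/c_1^2$, so that $y\leq K_0x_1^2\leq K_0x_2^2$. The lower condition $y\geq x_i^2(\log x_i)^{-A}$ holds trivially for any $A>0$. For the minus case we use \eqref{negative-absolute-corollary}. Here the hypothesis $b_2<c_1^2$ gives $y\leq x_1^2\leq x_2^2$, which is the range required by Theorem~\ref{thm:negative-main}. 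A prime $q>2$ is squarefree and satisfies $q>1$, so each prime $q\in(b_1X,b_2X]$ is an admissible $k\leq y$. We take $C'$ slightly larger than $C$ to absorb the change between $\log x$ and $\log X$. Outside a union of two exceptional sets of total size $O(X(\log X)^{-C})$, both pointwise formulas then hold with a saving $D$ that we fix below.

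Subtracting the two formulas gives
\[
 \sum_{x_1<h\leq x_2}\Lambda(h^2\pm q)
 =\mathfrak S_\pm(q)\bigl(H(x_2)-H(x_1)\bigr)+O\bigl(\sqrt X(\log X)^{-D}\bigr).
\]
In the plus case $H(x_2)-H(x_1)=x_2-x_1$. In the minus case $\sqrt q\leq\sqrt{b_2X}<x_1$, so $H(x_2)-H(x_1)=(x_2-\sqrt q)-(x_1-\sqrt q)=x_2-x_1$ as well. In both cases the difference is $(c_2-c_1)\sqrt X$. Lemma~\ref{lem:series-lower} gives $\mathfrak S_\pm(q)\gg1/\log X$. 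Taking $D=2$ therefore makes the main term dominate, and we obtain
\[
 \sum_{x_1<h\leq x_2}\Lambda(h^2\pm q)\gg\frac{\sqrt X}{\log X}.
\]

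It remains to discard proper prime powers and pass from the $\Lambda$-weighted sum to a count. The values $h^2\pm q$ with $h\leq x_2$ have size $\ll X$. The number of proper prime powers up to $O(X)$ is $O(\sqrt X)$, which is too many to discard trivially, so we need another bound. For a fixed prime $q$, we count the $h\leq x_2$ for which $h^2\pm q=p^j$ with $j\geq2$.

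When $j$ is even, the equation is $h^2\pm q=w^2$. Factoring gives $(w-h)(w+h)=\pm q$, which has $O(1)$ solutions. When $j\geq3$, the number of $j$-th powers up to $O(X)$ is $O(X^{1/3})$, and each such power arises from at most two values of $h$. Hence prime powers contribute $O(X^{1/3}\log X)=o(\sqrt X/\log X)$ to the weighted sum. The prime values $p=h^2\pm q$ therefore carry weight $\gg\sqrt X/\log X$. Each prime has $\log p\ll\log X$, so the number of such $h$ is
\[
 \gg\frac{\sqrt X}{(\log X)^2},
\]
which is \eqref{many-small-endpoints}. In particular at least one such $h$ exists, which gives \eqref{plus-prime-in-box} and \eqref{minus-prime-in-box}.

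The main point requiring care is verifying that the ranges of the corollary apply at both endpoints. This is why $K_0$ must be chosen in terms of the box in the plus case, and why the condition $b_2<c_1^2$ is needed in the minus case: Theorem~\ref{thm:negative-main} only allows $y\leq x^2$, and the condition also keeps $H_-$ linear across the whole interval.
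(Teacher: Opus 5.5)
Your proposal is correct and follows essentially the same route as the paper. You apply the pointwise corollary at $c_1\sqrt X$ and $c_2\sqrt X$ with $y=b_2X$ and subtract. In the plus case $K_0$ is chosen from the box; in the minus case $b_2<c_1^2$ secures both the range and the linearity of $(x-\sqrt q)_+$. You then use $\mathfrak S_\pm(q)\gg1/\log X$ and discard proper prime powers, factoring the even-exponent case and counting the $O(X^{1/3})$ higher powers.

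The differences are minor and harmless. You take $D=2$ rather than $D>2$. You handle all even exponents at once via the single equation $h^2\pm q=w^2$, which gives weight $O(\log X)$ instead of the paper's $O((\log X)^2)$. Just also take $K_0\geq1$, as the positive-shift theorem requires.
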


\begin{proof}
For~\eqref{plus-prime-in-box}, choose
\begin{equation}\label{small-box-K0}
 K_0>\max\left\{1,\frac{b_2}{c_1^2}\right\}.
\end{equation}
Set $A=1$ and choose a fixed $D>2$.  We apply
Corollary~\ref{cor:pointwise} with the given exponent $C$.  The squarefree condition in Corollary~\ref{cor:pointwise}
is automatic when the shift is prime.  Apply
\eqref{positive-absolute-corollary} at $c_1\sqrt X$ and
$c_2\sqrt X$, with $y=b_2X$ and this fixed $K_0$.  All required
inequalities hold for sufficiently large $X$.  The union of the two
exceptional sets has size $O_C(X(\log X)^{-C})$.  Subtraction gives,
for every remaining prime $q$,
\begin{equation*}\label{positive-pointwise-interval}
\sum_{c_1\sqrt X<h\leq c_2\sqrt X}
  \Lambda(h^2+q)
=\mathfrak S_+(q)(c_2-c_1)\sqrt X
  +O_D\left(\frac{\sqrt X}{(\log X)^D}\right).
\end{equation*}

For~\eqref{minus-prime-in-box}, apply
\eqref{negative-absolute-corollary} at the same two endpoints.  The
condition $b_2<c_1^2$ ensures that $y=b_2X$ lies in the range of
Theorem~\ref{thm:negative-main}.  It also gives
$(c_j\sqrt X-\sqrt q)_+=c_j\sqrt X-\sqrt q$.  The two terms
$-\sqrt q$ therefore cancel, and we obtain
\begin{equation*}\label{negative-pointwise-interval}
 \sum_{c_1\sqrt X<h\leq c_2\sqrt X}
  \Lambda(h^2-q)
=\mathfrak S_-(q)(c_2-c_1)\sqrt X
  +O_D\left(\frac{\sqrt X}{(\log X)^D}\right).
\end{equation*}
By~\eqref{series-lower}, the main term in both formulas is
$\gg\sqrt X/\log X$.

It remains to remove the contribution of proper prime powers.  Suppose
$h^2\pm q=r^a$ with $a\geq2$.
If $a=2d$, then
\[
 (r^d-h)(r^d+h)=q\quad(h^2+q=r^{2d}),
\]
or
\[
 (h-r^d)(h+r^d)=q\quad(h^2-q=r^{2d}).
\]
Since $q$ is prime, each even exponent gives at most one solution of
each type.  There are $O(\log X)$ possible even exponents, so their
total von Mangoldt weight is $O((\log X)^2)$.  For odd exponents, each
pair $(a,r)$ gives at most one $h$.  Hence
\[
 \sum_{\substack{3\leq a\ll\log X\\a\ \mathrm{odd}}}
 \ \,\sum_{r\ll X^{1/a}}\log r
 \ll\sum_{\substack{3\leq a\ll\log X\\a\ \mathrm{odd}}}
 X^{1/a}\log X
 \ll X^{1/3}\log X.
\]
Thus all proper prime powers have total von Mangoldt weight
\begin{equation}\label{prime-power-bound}
 O\bigl(X^{1/3}\log X+(\log X)^2\bigr)
 =O(X^{1/3}\log X).
\end{equation}
For our choice of $D$, the two main terms dominate their errors and
the bound in \eqref{prime-power-bound}.  In the positive case, all
represented values lie between fixed positive multiples of $X$.  The
same is true in the negative case since $c_1^2-b_2>0$.  The prime
values therefore have size $\asymp X$ and total weight
$\gg\sqrt X/\log X$.  Each prime has weight $O(\log X)$, which proves
\eqref{many-small-endpoints} and the existence of a represented
prime.  The map $h\mapsto h^2\pm q$ is strictly increasing for $h>0$.
Thus different values of $h$ give different primes and different
prime indices.
\end{proof}

\begin{lemma}[Box estimate with a fixed larger endpoint]
\label{lem:good-large}
Fix constants
\begin{equation}\label{large-box-constants}
 0<a_1<a_2,\qquad 0<c_1<c_2,\qquad c_2^2<a_1.
\end{equation}
Given $C>0$, all but $O_C(X(\log X)^{-C})$ primes
$P\in(a_1X,a_2X]$ have a smaller prime $q$ of the form
\begin{equation*}\label{reverse-prime-in-box}
 q=P-h^2,
 \qquad c_1\sqrt X<h\leq c_2\sqrt X.
\end{equation*}
For every such nonexceptional $P$, the number of $h$ for which
$P-h^2$ is prime is
\begin{equation}\label{many-large-endpoints}
 \gg_{a_1,a_2,c_1,c_2}\frac{\sqrt X}{(\log X)^2}.
\end{equation}
The implied constants may depend on the fixed box.
\end{lemma}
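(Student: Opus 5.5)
The plan mirrors the proof of Lemma~\ref{lem:good-small}, now using the reverse-shift estimate, Theorem~\ref{thm:reverse-main}, through its pointwise form \eqref{reverse-absolute-corollary} in Corollary~\ref{cor:pointwise}. Since $P$ is prime, it is squarefree and $P>1$, so $k=P$ is admissible. We apply the corollary with $y=a_2X$, $A=1$, the given exponent $C$, and a fixed $D>2$. We use it at the two lengths $x_1=c_1\sqrt X$ and $x_2=c_2\sqrt X$. For each $x_j$ we need $x_j^2(\log x_j)^{-1}\le y\le K_0x_j^2$, so we fix
\[
K_0>\max\left\{1,\frac{a_2}{c_1^2}\right\}.
\]
The lower inequality is automatic for large $X$. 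The union of the two exceptional sets has size $O_C(X(\log X)^{-C})$.

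For every remaining prime $P\in(a_1X,a_2X]$ we have $\sqrt P>\sqrt{a_1X}>c_2\sqrt X\ge x_j$ by \eqref{large-box-constants}. Hence $H_0(x_j,P)=x_j$ for both $j$, and subtraction gives
\[
\sum_{c_1\sqrt X<h\le c_2\sqrt X}\Lambda(P-h^2)
=\mathfrak S_-(P)(c_2-c_1)\sqrt X+O_D\!\left(\frac{\sqrt X}{(\log X)^D}\right).
\]
By Lemma~\ref{lem:series-lower}, $\mathfrak S_-(P)\gg1/\log X$, so the main term is $\gg\sqrt X/\log X$ and dominates the error because $D>1$. The condition $c_2^2<a_1$ also shows that every value $P-h^2$ lies in $[(a_1-c_2^2)X,\,a_2X]$, so it has size $\asymp X$ and is positive.

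Next we remove proper prime powers. Suppose $P-h^2=r^a$ with $a\ge2$. If $a=2d$, then $P=h^2+r^{2d}$. This is the step I expect to need the most care, because here, unlike Lemma~\ref{lem:good-small}, there is no factorization. The simplest route is to drop structure and count trivially. For each exponent $a\ge2$ and each $r\ll X^{1/a}$, there is at most one $h>0$ with $h^2=P-r^a$. Hence the total weight is
\[
\ll \sum_{2\le a\ll\log X}X^{1/a}\log X\ll X^{1/2}\log X .
\]
That is too large for $a=2$, so the squares need a separate argument. Since $P=h^2+s^2$ with $s=r^d$, and representations of a prime as a sum of two squares are unique up to order and sign, there are at most two pairs $(h,s)$ for each $P$. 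Each $d$ therefore contributes $O(\log X)$ weight, and over $d\ll\log X$ the total is $O((\log X)^2)$. Odd exponents $a\ge3$ contribute $O(X^{1/3}\log X)$ as before. Altogether the proper prime powers have weight $O(X^{1/3}\log X)$, which is negligible against $\sqrt X/\log X$.

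The weight on genuine primes is therefore $\gg\sqrt X/\log X$, and each prime carries weight $O(\log X)$. This gives \eqref{many-large-endpoints}, and in particular the existence of a smaller prime $q=P-h^2<P$. Finally, $h\mapsto P-h^2$ is strictly decreasing for $h>0$, so distinct $h$ give distinct primes.
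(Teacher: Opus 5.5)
Your proposal is correct and follows the paper's proof essentially step for step. Both use the same choice $K_0>\max\{1,a_2/c_1^2\}$ and subtract \eqref{reverse-absolute-corollary} at $c_1\sqrt X$ and $c_2\sqrt X$, using $H_0(c_j\sqrt X,P)=c_j\sqrt X$. Both invoke Lemma~\ref{lem:series-lower} for the main term and use uniqueness of a prime's representation as a sum of two squares to dispose of even prime powers. The only difference is cosmetic: the paper notes that the prime power $r^d$ determines $(r,d)$, which gives $O(\log X)$ for the even exponents instead of your $O((\log X)^2)$, and this does not affect the argument.
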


\begin{proof}
Choose
\begin{equation*}\label{large-box-K0}
 K_0>\max\left\{1,\frac{a_2}{c_1^2}\right\}.
\end{equation*}
Set $A=1$ and choose a fixed $D>2$. For each prime $P\in(a_1X,a_2X]$, the shift $k=P$ satisfies
the squarefree condition in Corollary~\ref{cor:pointwise}. Apply
\eqref{reverse-absolute-corollary} at $c_1\sqrt X$ and
$c_2\sqrt X$, with the given exponent $C$, $y=a_2X$ and this fixed $K_0$.  For every prime
outside the two exceptional sets, \eqref{large-box-constants} gives
$H_0(c_j\sqrt X,P)=c_j\sqrt X$.  Subtraction gives
\begin{equation}\label{reverse-pointwise-interval}
 \sum_{c_1\sqrt X<h\leq c_2\sqrt X}
  \Lambda(P-h^2)
=\mathfrak S_-(P)(c_2-c_1)\sqrt X
  +O_D\left(\frac{\sqrt X}{(\log X)^D}\right).
\end{equation}
By~\eqref{series-lower}, the main term is $\gg\sqrt X/\log X$.

Suppose now that $P-h^2=r^a$ is a proper prime power.
If $a=2d$, then
\[
 P=h^2+(r^d)^2.
\]
A prime has at most eight ordered signed representations as a sum of
two squares.  Since the second coordinate is a prime power, it
determines $r$ and $d$ uniquely.  Thus the even exponents give
weight $O(\log X)$, which is also $O((\log X)^2)$.  For odd
$a\geq3$, the proof of \eqref{prime-power-bound} gives weight
$O(X^{1/3}\log X)$.  Since $D>2$, the prime values in
\eqref{reverse-pointwise-interval} still have total weight
$\gg\sqrt X/\log X$.
Since
\[
 (a_1-c_2^2)X<P-h^2<(a_2-c_1^2)X,
\]
all these values have size $\asymp X$.  Since each prime term has weight $O(\log X)$,
\eqref{many-large-endpoints} follows.  The map $h\mapsto P-h^2$ is
strictly decreasing for $h>0$.  Thus different values of $h$ give
different primes and different indices.
\end{proof}

We now pass from one prime block to all primes.  Choose $X_0$ large
enough that the box estimate holds for every $X\geq X_0$.  Define
$X_j$ by $u_2X_j=u_1X_{j+1}$, where $0<u_1<u_2$ are the fixed
endpoints of the prime interval.  The blocks
\[
 \mathcal B_j=(u_1X_j,u_2X_j]
\]
are disjoint and cover $(u_1X_0,\infty)$.  If $T$ lies in
$\mathcal B_J$, the number of exceptional primes below $T$ in this
block is at most the number in the full block $\mathcal B_J$.  The
primes below $u_1X_0$ give only $O(1)$ terms.  The relevant box lemma
gives $O_{C,u_1,u_2}(X(\log X)^{-C})$ exceptional primes in every full
block.
Summing over the block indices $j$, we obtain
\begin{equation}\label{exception-sum}
 \#\{p\leq T:p\text{ is an exceptional prime}\}
 \ll_C 1+
 \sum_{\substack{j\geq0\\u_1X_j\leq T}}
 \frac{X_j}{(\log X_j)^C}
 \ll_C \frac{T}{(\log T)^C}+T^{1/2},
\end{equation}
where the implied constants may also depend on the fixed parameters
$u_1,u_2,c_1,c_2$.
For the last estimate, split the sum into the ranges
$X_j\leq T^{1/2}$ and $X_j>T^{1/2}$.
The first contributes $O(T^{1/2})$ by geometric summation.
In the second, $\log X_j\asymp\log T$, and the corresponding
$X_j$ have total sum $O(T)$.
Thus the number of exceptional primes up to $T$ is
$o(\pi(T))$ for $C>2$, by the prime number theorem.

We also use the multiplicity given by the two box lemmas.  Their
proofs show that different values of $h$ give different partner
primes.  After division by $X$, all endpoints stay in fixed compact
intervals inside $(0,\infty)$.  If the fixed endpoint is
$p_j\asymp X$, the prime number theorem gives, uniformly in the box,
\begin{equation}\label{index-multiplicity-transfer}
 X\asymp p_j\asymp j\log(2j),\qquad
 \log X\asymp\log(2j),\qquad
 \frac{\sqrt X}{(\log X)^2}
 \asymp\frac{\sqrt j}{(\log(2j))^{3/2}}.
\end{equation}
Fix $L>0$ and choose $C>L+1$ in the box estimates.
Taking $T=p_N\sim N\log N$, we obtain from
\eqref{exception-sum}
\[
 \#\{j\leq N:p_j\text{ is an exceptional prime}\}
 \ll_{I,L}
 \frac{N}{(\log N)^{C-1}}+\sqrt{N\log N}
 \ll_{I,L}\frac{N}{(\log N)^L}.
\]
Every sufficiently large nonexceptional prime $p_j$ lies in
one of the fixed boxes and has the partner lower bound
given by \eqref{index-multiplicity-transfer}.
The finitely many remaining indices are absorbed into
the error term.

The constant in the partner lower bound can be chosen
independently of $L$.  Indeed, in each box lemma we fix
$D>2$ and obtain the lower bound by retaining a fixed
positive proportion of the singular series main term.
Increasing $C$ changes the exceptional set estimate, but does not change this proportion.
Consequently, the partner lower bound fails for at most
\[
 O_{I,L}\left(\frac{N}{(\log N)^L}\right)
\]
indices $j\leq N$.

\begin{proof}[Proof of Theorem~\ref{thm:main}]
Let $I$ be a nonempty open interval.
We first consider the sum condition.
Choose $R\in I$ and a compact interval $J\subset I$ such that
$R\in\operatorname{int}(J)$.
By continuity, we may choose
\begin{equation*}\label{sum-box-constants}
 0<b_1<b_2<c_1^2<c_2^2
\end{equation*}
with $b_1$ and $b_2$ close to $1$, and with $c_1^2$ and $c_2^2$
close to $R+1$, so that
\begin{equation}\label{sum-ratio-box}
 \left[
  \frac{c_1^2-b_2}{b_2},
  \frac{c_2^2-b_1}{b_1}
 \right]\subset J.
\end{equation}
Apply Lemma~\ref{lem:good-small} to $h^2-q$.
For all but $O_C(X(\log X)^{-C})$ primes $q\in(b_1X,b_2X]$,
there are $\gg_I\sqrt X/(\log X)^2$ distinct primes $p=h^2-q$
with $h$ in the chosen box.
Put $m=\pi(p)$ and $n=\pi(q)$.
Then $p_m+p_n=h^2$, and the size ratio $p/q$ lies in the compact
interval in \eqref{sum-ratio-box}, which is contained in
$J\subset I$.
Moreover, both $p/X$ and $q/X$ remain in fixed compact subsets of
$(0,\infty)$.
The uniform prime number theorem in \eqref{index-transfer} therefore
gives
\[
 \frac mn=\frac{\pi(p)}{\pi(q)}=\frac pq+o(1)\in I
\]
for all sufficiently large $X$.
Combining \eqref{exception-sum} and
\eqref{index-multiplicity-transfer}, and taking $T=p_N$, proves
\eqref{sum-many-D}.
Indeed, choosing $C>L+1$ gives
$O_{I,L}(N(\log N)^{-L})$ exceptional indices, as shown above.

Apply this result to $I^{-1}=\{t^{-1}\mid t\in I\}$ and exchange the
two indices.
This proves \eqref{sum-many-N} because
\[
 r_{\Sigma,D}(j,I^{-1})=r_{\Sigma,N}(j,I).
\]

We next consider the difference condition.
If $1\in I$, choose a nonempty open interval
$I_+\subset I\cap(1,\infty)$.
Since each counting function is nondecreasing under inclusion
of intervals, the result for $I_+$ implies the result for $I$.
It therefore suffices to consider
$I\subset(0,1)$ or $I\subset(1,\infty)$.

Suppose now that $I\subset(1,\infty)$.
Choose $R\in I$ and a compact interval $J\subset I$ such that
$R\in\operatorname{int}(J)$.

For the denominator estimate, choose $0<b_1<b_2$ close to $1$ and
$0<c_1<c_2$ such that $c_j^2$ is close to $R-1$.
We choose these intervals small enough that
\begin{equation}\label{difference-small-box}
 \left[
  1+\frac{c_1^2}{b_2},
  1+\frac{c_2^2}{b_1}
 \right]\subset J.
\end{equation}
No relation between $b_2$ and $c_1^2$ is needed.
When $R$ is close to $1$, we use a larger fixed value of $K_0$ in
\eqref{small-box-K0}.
Apply Lemma~\ref{lem:good-small} to $h^2+q$.
For every good prime $q=p_n$, there are
$\gg_I\sqrt X/(\log X)^2$ distinct primes $P=q+h^2=p_m$.
The size ratio $P/q$ lies in the compact interval in
\eqref{difference-small-box}.
Uniformly over the box, \eqref{index-transfer} gives
$m/n=P/q+o(1)\in I$.
Since $|p_m-p_n|=h^2$, equations \eqref{exception-sum} and
\eqref{index-multiplicity-transfer} prove
\eqref{difference-many-D}.

For the numerator estimate, choose $0<a_1<a_2$ close to $R$ and
$0<c_1<c_2$ such that $c_j^2$ is close to $R-1$.
We choose these intervals small enough that
\begin{equation}\label{difference-large-box}
 c_2^2<a_1,\qquad
 \left[
  \frac{a_2}{a_2-c_1^2},
  \frac{a_1}{a_1-c_2^2}
 \right]\subset J.
\end{equation}
These choices are possible because both ratios in the display are
equal to $R$ at the central values.
Lemma~\ref{lem:good-large} gives
$\gg_I\sqrt X/(\log X)^2$ distinct primes $q=P-h^2=p_n$ for every
good prime $P=p_m$.
The size ratio $P/q$ lies in the compact interval in
\eqref{difference-large-box}.
Thus \eqref{index-transfer} gives $m/n=P/q+o(1)\in I$ for all
sufficiently large $X$.
Equations \eqref{difference-large-box}, \eqref{exception-sum}, and
\eqref{index-multiplicity-transfer} prove
\eqref{difference-many-N}.

Finally, suppose that $I\subset(0,1)$.
Then $I^{-1}\subset(1,\infty)$.
After exchanging $m$ and $n$, the numerator estimate for $I^{-1}$
gives the denominator estimate for $I$, while the denominator
estimate for $I^{-1}$ gives the numerator estimate for $I$.
Indeed,
\[
 r_{\Delta,D}(j,I)=r_{\Delta,N}(j,I^{-1}),
 \qquad
 r_{\Delta,N}(j,I)=r_{\Delta,D}(j,I^{-1}).
\]
Thus both difference estimates hold for every nonempty open interval
$I\subset\mathbb R_{>0}$.

Taking $c_I$ to be the smallest of the four positive constants above
proves \eqref{sum-many-D}, \eqref{sum-many-N},
\eqref{difference-many-D}, and \eqref{difference-many-N}.
This completes the proof.
\end{proof}

\section*{Acknowledgements}
S.H. would like to thank Bingrong Huang and Chung Pang Mok for their constant encouragement.
S.X. would like to thank Jianya Liu and Ye Tian for their constant encouragement.
S.H. was partially supported by NSFC (No. 12601008), and the China Postdoctoral Science Foundation (No. 2026M793348).
S.X. was partially supported by NSFC (No. 12631001).

This project was started at the 2026 Qiuzhen International Mathematics Conference held at Tsinghua University.

\section*{Author contributions}

Both authors contributed equally to this work.
The authors are listed in alphabetical order and serve as co-corresponding authors.

\section*{Conflict of interest statement}

The authors declare no conflict of interest.

\end{document}